\newcommand{\SOC}[2]{{\mathcal{L}^{#2} _{#1}}}
\newcommand{\conicHull}{\operatorname{cone}}
\newcommand{\norm}[1]{\lVert{#1}\rVert}
\newcommand{\inProd}[2]{\langle #1 , #2 \rangle }
\newcommand{\stdCone}{ {\mathcal{K}}}
\newcommand{\stdFace}{ \mathcal{F}}
\renewcommand{\Re}{\mathbb{R}}
\DeclarePairedDelimiter\abs{\lvert}{\rvert}%
\newcommand{\rank}{\operatorname{rank}}
\newcommand{\Aut}{\operatorname{Aut}}
\newcommand{\graph}{\operatorname{graph}}
\newcommand{\GS}{\mathcal{N}}
\newcommand{\sign}{\operatorname{sign}}
\newcommand{\HLS}{\Re_{++}}
\renewcommand{\Im}{\operatorname{Im}}
\newtheorem{definition}{Definition}
\newtheorem{lemma}[definition]{Lemma}
\newtheorem{proposition}[definition]{Proposition}
\newtheorem{corollary}[definition]{Corollary}
\newtheorem{theorem}[definition]{Theorem}
\newtheorem*{proposition*}{Proposition}
\theoremstyle{remark}
\title{The automorphism group and the non-self-duality of $p$-cones}
\author{
	Masaru Ito%
		\thanks{Department of Mathematics, College of Science and Technology, Nihon University,
			1-8-14 Kanda-Surugadai, Chiyoda-Ku, Tokyo 101-8308, Japan 
			(\texttt{ito.m@math.cst.nihon-u.ac.jp)}.}
	\and
	Bruno F. Louren\c{c}o%
	\thanks{Department of Mathematical Informatics, Graduate School of Information Science \& Technology, University of Tokyo, 7-3-1 Hongo, Bunkyo-ku, Tokyo 113-8656, Japan.
		(\texttt{lourenco@mist.i.u-tokyo.ac.jp})}
}
\begin{document}
	\maketitle
	
\begin{abstract}
In this paper,  we determine the automorphism group of the $p$-cones ($p\neq 2$) in dimension greater than two. 
In particular, we show that the automorphism group of those $p$-cones are the positive scalar multiples of the generalized permutation matrices that fix the main axis of the cone.
Next, we take a look at a problem related to the duality theory of the $p$-cones.
Under the Euclidean inner product it is well-known 
that a $p$-cone is self-dual only when $p=2$.
However, it was not known whether it is possible  to construct an inner product depending on $p$ which makes the $p$-cone self-dual.
Our results shows that no matter which inner product is considered, a
$p$-cone will never become self-dual unless $p=2$ or the dimension is less  than three. 
\end{abstract}

\section{Introduction}\label{sec:int}
In this work, we prove two results on the 
structure of the $p$-cones
$$
\SOC{p}{n+1}=\{(t,x)\in\Re\times \Re^n \mid t \geq \norm{x}_p\}.
$$
First, we describe the automorphism group of 
the $p$-cones $\SOC{p}{n+1}$ for $n \geq 2$ and $p \neq 2$, $1 < p < \infty$. We show that every automorphism of $\SOC{p}{n+1}$ must have the format
\begin{equation}\label{eq:format}
\alpha\begin{pmatrix}
1 & 0\\
0 & P
\end{pmatrix},
\end{equation}
where $\alpha > 0$ and $P$ is an $n\times n$ generalized permutation matrix.
The second result is that, for $n \geq 2$ and $p\neq 2$, it is not possible to construct an inner product on $\Re^{n+1}$ for which  $\SOC{p}{n+1}$ becomes self-dual.
In fact, the second result is derived as a corollary of a stronger result that $\SOC{p}{n+1}$ and $\SOC{q}{n+1}$ cannot be linearly isomorphic if $p<q$ and $n\geq 2$, except when $(p,q,n)=(1,\infty,2)$.

The motivation for this research is partly due to the work by Gowda and Trott \cite{GT14}, where they determined the automorphism group of $\SOC{1}{n+1}$ and $\SOC{\infty}{n+1}$.
However, they left open the problem of determining the automorphisms of the other $p$-cones, for $p \neq 2$.
Here, we recall that the case $p = 2$ correspond to the \emph{second order cones} and they are \emph{symmetric}, i.e., \emph{self-dual} and \emph{homogeneous}. The structure of second-order cones and their automorphisms follow from the more general theory of 
Jordan Algebras \cite{FK94}, see also \cite{LS75}. 

In \cite{GT14}, Gowda and Trott also proved that $\SOC{1}{n+1}$ and $\SOC{\infty}{n+1}$ are \emph{not} homogenous cones and they posed the problem of proving/disproving that $\SOC{p}{n+1}$ is not homogeneous for $p\neq 2$, $n \geq 2$.  Recall that a cone is said to be \emph{homogeneous} if its group of automorphisms acts transitively on the interior of the cone. 
In \cite{IL17}, using the theory of $T$-algebras \cite{V63}, we gave a proof that  $\SOC{p}{n+1}$ is not homogenous for $p\neq 2$, $n \geq 2$. However, there are two unsatisfactory aspects of our previous result.
The first is that we were not able to compute the automorphism group of $\SOC{p}{n+1}$. The second is 
that although we showed that $\SOC{p}{n+1}$ is not homogeneous, we were unable to obtain two elements $x,y$ in interior of $\SOC{p}{n+1}$ such that no automorphism 
of  $\SOC{p}{n+1}$ maps $x$ to $y$. 
That is, we were unable to show concretely how 
homogeneity breaks down on $\SOC{p}{n+1}$.
The results discussed here remedy those flaws and provide an alternative proof that $\SOC{p}{n+1}$ is not homogeneous.

Another motivation for this work is the general problem of determining when a closed convex cone $\stdCone \subseteq \Re^n$ is self-dual.
If $\Re^n$ is equipped with some inner product 
$\inProd{\cdot}{\cdot}$, the dual cone of $\stdCone$ is defined as 
$$
\stdCone^* = \{y \in \Re^n \mid \inProd{x}{y}\geq 0, \forall x \in \stdCone\}.
$$ 
As discussed in Section 1 of \cite{IL17}, an often overlooked point is that $\stdCone^*$ \emph{depends on $\inProd{\cdot}{\cdot}$}. 
Accordingly, it is entirely plausible that a cone that is not self-dual under the Euclidean inner product might become self-dual if the inner product is chosen appropriately.

This detail is quite important because  sometimes we see articles claiming that a certain cone is not a symmetric cone because it is not self-dual under the Euclidean inner product.
This is, of course, not enough. As long as a cone is homogeneous and there exists some inner product that makes it self-dual, the cone can be investigated under the theory of  Jordan Algebras.

This state of affairs brings us to the case of the $p$-cones. 
Up until  the recent  articles \cite{GT14,IL17},  there was no rigorous proof that the 
$p$-cones $\SOC{p}{n+1}$ were \emph{not} symmetric when $p \neq 2$ and $n \geq 2$.
Now, although we know that   $\SOC{p}{n+1}$ is not homogeneous for $p \neq 2$ and $n \geq 2$, it still remains to investigate whether $\SOC{p}{n+1}$ could become self-dual under an appropriate inner product.
This question was partly discussed by Miao, Lin and Chen in \cite{MLC17}, where they showed that a $p$-cone (again, $p\neq 2$, $n\geq 2$) is not self-dual under  an inner product induced by a diagonal matrix.
The results described here show, in particular, that no inner product can make $\SOC{p}{n+1}$ self-dual, for $p \neq 2$, $n\geq 2$.

We now explain some of the intuition behind our proof techniques.
Let $n\geq 2$ and let $f_p:\Re^n\setminus \{0\} \to \Re$ be the function that maps 
$x$ to $\norm{x}_p$. 
When $p \in (1,2)$, we have that $f_p$ is twice differentiable only at points $x$ for which 
$x_i \neq 0$, for all $i$. 
In contrast, if $p \in (2,\infty)$, $f_p$ is twice differentiable throughout $\Re^n\setminus \{0\}$.
Now, we let $M_p$ be  the boundary without the zero  of the cone $\SOC{p}{n+1}$.
With that, $M_p$ is exactly the graph of the function $f_p$. Furthermore, $M_p$ is a 
$C^1$-embedded smooth manifold if $p \in (1,2)$. If $p \in (2,\infty)$, $M_p$ is a $C^2$-embedded smooth manifold.
Now, any linear bijection between $\SOC{p}{n+1}$ and $\SOC{q}{n+1}$ must map the boundary of  $\SOC{p}{n+1}$ to the boundary of $\SOC{q}{n+1}$, thus producing a map between $M_p$ and $M_q$.
Then, if $p \in (1,2)$ and 
$q \in (2,\infty)$, there can be no linear bijection between  $\SOC{p}{n+1}$ and $\SOC{q}{n+1}$ because this would establish a diffeomorphism between submanifolds that are embedded with different levels of smoothness.

Now suppose that $p,q$ are both in $(1,2)$ and 
that there exists some linear bijection $A$ between $\SOC{p}{n+1}$ and $\SOC{q}{n+1}$. 
If $(f_p(x),x) \in M_p$ is such that $f_p$ is \emph{not} twice differentiable at $x$, then $A$ must map $(f_p(x),x)$ to a point $(f_q(y),y)$ for which $f_q$ is \emph{not} twice differentiable at $y$. This idea is made precise in Proposition \ref{prop:C^k}.
In particular, this fact imposes severe restrictions on how $\Aut(\SOC{p}{n+1})$ acts on $\SOC{p}{n+1}$ and this is the key observation necessary for showing that the matrices in $\Aut(\SOC{p}{n+1})$ can be written as in \eqref{eq:format}.

This work is divided as follows. In Section \ref{sec:prel} we present the notation used in this paper and review some facts about cones, self-duality and $p$-cones.
In Section \ref{sec:diff}, we discuss the tools from manifold theory necessary for our discussion. Finally, in Section \ref{sec:main} we prove our main results.


\section{Preliminaries}\label{sec:prel}
A \emph{convex cone} is a subset $\stdCone$ of some real vector space $\Re^n$ such that $\alpha x + \beta y \in \stdCone$ holds whenever $x,y \in \stdCone$ and $\alpha,\beta \geq 0$. A cone $\stdCone$ is said to be \emph{pointed} if $\stdCone \cap - \stdCone = \{0\}$.
For a subset $S$ of $\Re^n$, the (closed) \emph{conical hull} of $S$, denoted by $\conicHull(S)$, is the smallest closed convex cone in $\Re^n$ containing $S$. If 
$v \in \Re^n$, we write $\Re_+(v)$ for the half-line generated by $v$ and $\HLS$ for $\Re_+(v) \setminus \{0\}$, i.e., 
\begin{align*}
\Re_+(v)& = \{\alpha v \mid \alpha \geq 0\},\\
\HLS(v)& = \{\alpha v \mid \alpha > 0\}.
\end{align*}
A convex subset $\stdFace$ of $\stdCone$ is said to be a \emph{face} of $\stdCone$ if
the following condition hold:
If $x,y \in \stdCone$ satisfies $\alpha x + (1-\alpha)y \in \stdFace$ for some $\alpha \in (0,1)$ then $x,y \in \stdFace$ holds.
A one dimensional face is called an \emph{extreme ray}.
A \emph{polyhedral convex cone} is a convex cone that can be expressed as the solution set of finitely many linear inequalities.

If $\inProd{\cdot}{\cdot}$ is an inner product on $\Re^n$, we can define the \emph{dual cone} of $\stdCone$ with respect to the inner product $\inProd{\cdot}{\cdot}$ by
$$\stdCone^* = \{x \in \Re^n \mid \inProd{x}{y}\geq 0,~\forall y \in \stdCone\}.$$
A convex cone $\stdCone$ is \emph{self-dual} if there exists an inner product on $\Re^n$ for which the dual cone coincides with $\stdCone$ itself.

Two convex cones $\stdCone_1$ and $\stdCone_2$ in $\Re^n$ are said to be \emph{isomorphic} if there exists a linear bijection $A \in GL_n(\Re)$, called an \emph{isomorphism}, such that $A\stdCone_1 = \stdCone_2$.
An \emph{automorphism} of a convex cone $\stdCone$ in $\Re^n$ is a map $A \in GL_n(\Re)$ such that $A\stdCone = \stdCone$.
The group of all automorphisms of $\stdCone$ is written by $\Aut(\stdCone)$ and called the \emph{automorphism group of $\stdCone$}.

A convex cone $\stdCone$ is said to be \emph{homogeneous} if $\Aut(\stdCone)$ acts transitively on the interior of $\stdCone$, that is,
for every elements $x$ and $y$ of the interior of $\stdCone$, there exists $A \in \Aut(\stdCone)$ such that $y = Ax$.

\subsection{On self-duality}
Let $\stdCone \subseteq \Re^n$ be a closed convex cone.
As we emphasized in Section \ref{sec:int}, self-duality is a relative concept and depends on what inner product we are considering. Let $\inProd{\cdot}{\cdot}_E$ denote 
the Euclidean inner product  and consider the dual of $\stdCone$ with respect $\inProd{\cdot}{\cdot}_E$.
$$
\stdCone^* = \{y \in \Re^n \mid \inProd{x}{y}_E \geq 0, \forall x \in \stdCone \}.
$$
We have the following proposition.
\begin{proposition}\label{prop:selfdual}
Let $\stdCone \subseteq \Re^n$ be a closed convex cone and let $\stdCone^*$ be the dual of $\stdCone$ with the respect to the Euclidean inner product $\inProd{\cdot}{\cdot}_E$.
Then, there exists an inner product on $\Re^n$ that turns $\stdCone$ into a self-dual cone if and only if there exists a symmetric positive definite matrix $A$ such that 
$A\stdCone = \stdCone^*$. 
\end{proposition}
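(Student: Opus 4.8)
The plan is to exploit the standard bijective correspondence between inner products on $\Re^n$ and symmetric positive definite matrices: every inner product has the form $\inProd{x}{y}_Q := \inProd{Qx}{y}_E$ for a unique symmetric positive definite $Q$, and conversely every such $Q$ yields an inner product. With this identification in hand, proving the proposition reduces to a one-line computation expressing the $\inProd{\cdot}{\cdot}_Q$-dual of $\stdCone$ in terms of the Euclidean dual $\stdCone^*$.

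First I would fix a symmetric positive definite matrix $Q$ and compute the dual cone of $\stdCone$ with respect to $\inProd{\cdot}{\cdot}_Q$. Using the symmetry of $Q$ to write $\inProd{Qx}{y}_E = \inProd{x}{Qy}_E$, one gets $\{y \in \Re^n \mid \inProd{x}{y}_Q \geq 0,\ \forall x \in \stdCone\} = \{y \in \Re^n \mid \inProd{x}{Qy}_E \geq 0,\ \forall x \in \stdCone\} = \{y \in \Re^n \mid Qy \in \stdCone^*\} = Q^{-1}\stdCone^*$, where invertibility of $Q$ is used in the last step. Consequently, $\stdCone$ is self-dual with respect to $\inProd{\cdot}{\cdot}_Q$ if and only if $Q^{-1}\stdCone^* = \stdCone$, equivalently $\stdCone^* = Q\stdCone$.

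Both implications follow immediately. For the forward direction, if some inner product makes $\stdCone$ self-dual, express that inner product as $\inProd{\cdot}{\cdot}_Q$ and set $A := Q$; then $A$ is symmetric positive definite and $A\stdCone = \stdCone^*$ by the displayed equivalence. For the converse, given a symmetric positive definite $A$ with $A\stdCone = \stdCone^*$, define $\inProd{x}{y} := \inProd{Ax}{y}_E$, which is a genuine inner product; its associated dual cone is $A^{-1}\stdCone^* = A^{-1}(A\stdCone) = \stdCone$, so $\stdCone$ is self-dual for this inner product.

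There is essentially no hard step here: the only points requiring care are recording the correspondence between inner products and symmetric positive definite matrices, and using the symmetry of $Q$ to shift it from one argument of the bilinear form to the other. I would also note in passing that closedness and convexity of $\stdCone$ play no role in this particular equivalence — they are simply the natural standing hypotheses in context, guaranteeing $\stdCone^{**} = \stdCone$ so that the notion of self-duality behaves as expected.
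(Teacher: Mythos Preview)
Your proof is correct and follows essentially the same approach as the paper: identify an arbitrary inner product with a symmetric positive definite matrix $Q$, compute the $\inProd{\cdot}{\cdot}_Q$-dual of $\stdCone$ as $Q^{-1}\stdCone^*$, and read off both implications. The only cosmetic difference is that you front-load the computation once and then apply it in both directions, whereas the paper unpacks it separately for each implication.
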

\begin{proof}
First, suppose that there exist some inner product $\inProd{\cdot}{\cdot}_\stdCone$  for which $\stdCone$ becomes self-dual. Then, there is a symmetric positive definite matrix $A$ such that 
$$
\inProd{x}{y}_\stdCone = \inProd{x}{Ay}_E,
$$
for all $x, y \in \Re^n$. In fact, $A_{ij} = \inProd{e_i}{e_j}_{\stdCone}$, where $e_i$ is the $i$-th standard unit vector in $\Re^n$. By assumption, we have 
\begin{align*}
\stdCone & = \{x \in \Re^n \mid \inProd{x}{Ay}_E \geq 0, \forall y \in \stdCone  \}\\
& = \{x \in \Re^n \mid \inProd{Ax}{y}_E \geq 0, \forall y \in \stdCone  \}\\
& = A^{-1}\{z \in \Re^n \mid \inProd{z}{y}_E \geq 0, \forall y \in \stdCone  \}\\
& = A^{-1}\stdCone^*.
\end{align*}
This shows that $A\stdCone = \stdCone^*$.

Reciprocally, if $A\stdCone = \stdCone^*$, we define the inner product $\inProd{\cdot}{\cdot}_{\stdCone}$ such that $$
\inProd{x}{y}_\stdCone \coloneqq \inProd{x}{Ay}_E,
$$
for all $x, y \in \Re^n$. Then, a straightforward calculation shows that the dual of $\stdCone$ with respect $\inProd{\cdot}{\cdot}_{\stdCone}$ is indeed $\stdCone$.
\end{proof}
Therefore, determining whether $\stdCone$ is self-dual for some inner product boils down to determining the existence of a positive definite linear isomorphism between cones, which is a difficult problem in general. 

\subsection{$p$-cones}\label{sec:pcon}
Here we present some basic facts on $p$-cones.
The \emph{$p$-cone} is the closed convex cone in $\Re^{n+1}$ defined by
$$
\SOC{p}{n+1} = \{(t,x)\in\Re\times \Re^n \mid t \geq \norm{x}_p\}
$$
where $\norm{x}_p$ is the $p$-norm on $\Re^n$:
$$\norm{x}_p=(|x_1|^p+\cdots + |x_n|^p)^{1/p} ~\text{ for }~ p \in [1,\infty) ~\text{ and  }~ \norm{x}_\infty=\max(|x_1|,\ldots,|x_n|).$$

The dual cone of the $p$-cone with respect to the Euclidean inner product is given by
$
(\SOC{p}{n+1})^* = \SOC{q}{n+1}
$
where $q$ is the conjugate of $p$, that is, $\frac{1}{p}+\frac{1}{q}=1$.
The cones $\SOC{1}{n+1}$ and $\SOC{\infty}{n+1}$ are polyhedral.
In fact, $\SOC{1}{n+1}$ has $2n$ extreme rays $$
\Re_{+}(1,\sigma e_i^n),\quad i=1,\ldots,n,\quad \sigma\in\{-1,1\},
$$
where $e_i^n$ denotes the $i$-th standard unit vector in $\Re^n$.
Moreover, $\SOC{\infty}{n+1}$ has $2^n$ extreme rays
$$\Re_+(1,\sigma_1,\ldots,\sigma_n),\quad \sigma_1,\ldots,\sigma_n \in \{-1,1\}.$$
The difference in the number of extreme rays shows that  $\SOC{1}{n+1}$ and $\SOC{\infty}{n+1}$ are not isomorphic if $n\geq 3$. However, for $n = 2$, they are indeed isomorphic as
\begin{equation}\label{eq:k1kinf}
A\SOC{1}{3}=\SOC{\infty}{3},\quad
A =
\left(
\begin{array}{ccc}
1 & 0 & 0 \\
0 & \sqrt{2}\cos(\pi/4) & -\sqrt{2}\sin(\pi/4) \\
0 & \sqrt{2}\sin(\pi/4) & \sqrt{2}\cos(\pi/4)
\end{array}
\right)
= \left(
\begin{array}{ccc}
1 & 0 & 0 \\
0 & 1 & -1 \\
0 & 1 & 1
\end{array}
\right).
\end{equation}
The second order cone $\SOC{2}{n+1}$ is known to be a \emph{symmetric cone}, that is, it is both self-dual and homogeneous, admitting a Jordan algebraic structure \cite{FK94}.
The automorphism group of the second order cone can be identified by the result of Loewy and Schneider \cite{LS75}: $A\SOC{2}{n+1}=\SOC{2}{n+1}$ or $A\SOC{2}{n+1} = -\SOC{2}{n+1}$ holds if and only if $A^TJ_{n+1}A=\mu J_{n+1}$ for some $\mu>0$ where $J_{n+1}={\rm diag}(1,-1,\ldots,-1)$.

Gowda and Trott determined the structure of the automorphism group of the $p$-cones in the case $p=1,\infty$:

\begin{proposition}[Gowda and Trott, Theorem 7 in  \cite{GT14}]
\label{prop:GT}
For $n \geq 2$, $A$ belongs to $\Aut(\SOC{1}{n+1})$ if and only if $A$ has the form
$$
A = \alpha \left(
\begin{array}{cc}
1 & 0\\
0 & P
\end{array}
\right),
$$
where $\alpha>0$ and $P$ is an $n \times n$ \emph{generalized permutation matrix}, that is, a permutation matrix multiplied by a diagonal matrix whose diagonal elements are $\pm 1$.
Moreover, $\Aut(\SOC{\infty}{n+1})=\Aut(\SOC{1}{n+1})$ holds.
\end{proposition}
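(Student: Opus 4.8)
The plan is to treat the two cones separately: for $\SOC{1}{n+1}$ I would argue directly from its polyhedral structure, and then deduce the claim for $\SOC{\infty}{n+1}$ by duality. Sufficiency is easy: a generalized permutation matrix $P$ leaves $\norm{\cdot}_1$ and $\norm{\cdot}_\infty$ invariant, since permuting coordinates and flipping signs changes neither norm, so $\left(\begin{smallmatrix}1&0\\0&P\end{smallmatrix}\right)$ maps each of $\SOC{1}{n+1}$ and $\SOC{\infty}{n+1}$ onto itself, and multiplication by $\alpha>0$ preserves any cone; hence every matrix of the stated form is an automorphism of both cones.

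For necessity in the case of $\SOC{1}{n+1}$, let $A\in\Aut(\SOC{1}{n+1})$. Since a linear bijection carrying the cone onto itself maps faces onto faces and preserves dimension, $A$ induces a permutation of the $2n$ extreme rays, which are generated by $g_{i,\sigma}:=(1,\sigma e_i^n)$ for $i=1,\dots,n$ and $\sigma\in\{-1,1\}$. Thus $Ag_{i,\sigma}=\mu_{i,\sigma}\,g_{\rho(i,\sigma)}$ for some bijection $\rho$ of the index-sign set and scalars $\mu_{i,\sigma}>0$. The key point is that $g_{i,+}+g_{i,-}=(2,0)$ does not depend on $i$, so $\mu_{i,+}g_{\rho(i,+)}+\mu_{i,-}g_{\rho(i,-)}=A(2,0)$ for every $i$; call this common vector $w$. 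I would then run a short combinatorial analysis: if the coordinate indices of $\rho(i,+)$ and $\rho(i,-)$ were distinct, $w$ would have exactly two nonzero coordinates, in those two positions, and imposing this simultaneously for all $i$ — while the $n$ unordered pairs $\{\rho(i,+),\rho(i,-)\}$ partition the $2n$ generators and all $\mu$'s are positive — leads to a contradiction (in the borderline case $n=2$ one must track signs rather than merely count nonzero coordinates). Hence for every $i$ the images $\rho(i,+),\rho(i,-)$ share a common index $\ell(i)$ with opposite signs, and $\ell$ is a permutation of $\{1,\dots,n\}$. Feeding this back, $w=\bigl(\mu_{i,+}+\mu_{i,-},\,(\mu_{i,+}-\mu_{i,-})\epsilon_i e_{\ell(i)}^n\bigr)$; as the vector part is the same for all $i$ yet supported on $\{\ell(i)\}$ with $\ell$ surjective and $n\geq 2$, it must vanish, which forces all $\mu_{i,\sigma}$ to equal a single constant $\alpha>0$ and $A(1,0)=\alpha(1,0)$. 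Evaluating $A$ on the basis $(1,0),(0,e_1^n),\dots,(0,e_n^n)$ via $A(0,e_i^n)=Ag_{i,+}-A(1,0)=\alpha\epsilon_i(0,e_{\ell(i)}^n)$ then gives exactly $A=\alpha\left(\begin{smallmatrix}1&0\\0&P\end{smallmatrix}\right)$ with $P$ the generalized permutation matrix $e_i^n\mapsto\epsilon_i e_{\ell(i)}^n$.

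Finally, for $\SOC{\infty}{n+1}$ I would use that $\SOC{\infty}{n+1}=(\SOC{1}{n+1})^*$ under the Euclidean inner product together with the elementary fact that $B\in\Aut(\stdCone)$ if and only if $B^T\in\Aut(\stdCone^*)$, which is immediate from $\inProd{Bx}{y}_E=\inProd{x}{B^Ty}_E$. Hence $\Aut(\SOC{\infty}{n+1})=\{B^T:B\in\Aut(\SOC{1}{n+1})\}$, and since the transpose of $\alpha\left(\begin{smallmatrix}1&0\\0&P\end{smallmatrix}\right)$ is $\alpha\left(\begin{smallmatrix}1&0\\0&P^T\end{smallmatrix}\right)$ with $P^T$ again a generalized permutation matrix, this set equals $\Aut(\SOC{1}{n+1})$. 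I expect the combinatorial step pinning down how $A$ pairs and rescales the extreme rays to be the main obstacle: the surrounding steps are routine, but that argument genuinely needs care, particularly in the borderline dimension $n=2$.
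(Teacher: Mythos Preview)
The paper does not actually prove this proposition: it is quoted verbatim as a result of Gowda and Trott (Theorem~7 in \cite{GT14}) and used as a black box throughout, so there is no ``paper's own proof'' to compare against.

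That said, your proposed argument is sound. The extreme-ray permutation set-up is correct, and the device of exploiting the identity $g_{i,+}+g_{i,-}=(2,0)$, independent of $i$, is exactly the right lever: it forces the image pairs $\{\rho(i,+),\rho(i,-)\}$ to share a coordinate index, because if some pair had distinct indices $j\neq k$ the vector part of $w=A(2,0)$ would be nonzero precisely at $j$ and $k$, pinning all other pairs to the same two indices and contradicting the fact that the $n$ disjoint image pairs must cover all $2n$ generators once $n\geq 3$; the $n=2$ case is then dispatched by the sign check you flag. The remaining deductions ($\mu_{i,+}=\mu_{i,-}=\alpha$, $A(1,0)=\alpha(1,0)$, and the block form) are routine, and the duality step for $\SOC{\infty}{n+1}$ via $\Aut(\stdCone^*)=\Aut(\stdCone)^T$ is standard. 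You correctly identify the combinatorial pairing step as the only place requiring genuine care; once that is written out cleanly, the proof is complete.
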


In particular, Proposition \ref{prop:GT} yields the following consequences.
\begin{itemize}
\item $\SOC{1}{n+1}$ and $\SOC{\infty}{n+1}$ are not homogeneous for $n \geq 2$ because any $A \in \Aut(\SOC{1}{n+1})=\Aut(\SOC{\infty}{n+1})$ fixes the ``main axis'' $\Re_+(1,0,\ldots,0)$ of these cones.
\item $\SOC{1}{n+1}$ and $\SOC{\infty}{n+1}$ are never self-dual for $n \geq 2$. This is a known fact, but we will also obtain this result as a consequence of Corollary~\ref{cor:selfdual} where Proposition~\ref{prop:GT} will be helpful to prove the case $n=2$. 
At this point, we should remark that Barker and Foran proved in Theorem 3 of \cite{BF76} that a self-dual polyhedral cone in $\Re^3$ must have an odd number of extreme rays. Since $\SOC{1}{3}$ and  $\SOC{\infty}{3}$ have four extreme rays, Barker and Foran's result implies that they are never self-dual.
\end{itemize}

\section{Manifolds, tangent spaces and the Gauss map}\label{sec:diff}
In this subsection, we will provide a brief overview of the  tools we will use from manifold theory, more details can be seen in Lee's book \cite{Lee2012} or the initial chapters of do Carmo's book \cite{Carmo92}. 
First, we recall that a \emph{$n$-dimensional smooth manifold $M$} is a second countable Haussdorf topological space  equipped with a collection $\mathcal{A}$ of maps $\varphi: U \to \Re^n$ with the following properties.
\begin{enumerate}[$(i)$]
	\item each map $\varphi \in \mathcal{A}$ is such that $\varphi(U)$ is an open set of $\Re^n$. Furthermore, $\varphi$ is an homeomorphism between $U$ and $\varphi(U)$, i.e., $\varphi$ is a continuous bijection with continuous inverse.
	\item if $\varphi:U \to \Re^n, \psi:V \to\Re^n$ both belong to $\mathcal{A}$ and $U \cap V \neq \emptyset$, then $\psi \circ \varphi^{-1}: \varphi^{-1}(U\cap V) \to \psi(U\cap V)$ is a $C^{\infty}$ diffeomorphism, i.e., $\psi \circ \varphi^{-1}$ is a bijective function such that $\psi \circ \varphi^{-1}$ and $\varphi\circ \psi^{-1}$ have continuous derivatives of all orders.
	\item for every $x \in M$, we can find a map $\varphi \in \mathcal{A}$ for which $x$ belongs to the domain of $\varphi$.
	\item if $\psi$ is another map defined on a subset of $M$ satisfying $(i)$ and $(ii)$, then $\psi \in \mathcal{A}$. That is, $\mathcal{A}$ is maximal.
\end{enumerate}
The set $\mathcal{A}$ is called a \emph{maximal smooth atlas} and the  maps in $\mathcal{A}$ are called \emph{charts}. If $\varphi: U \to \Re^n$ is a chart and $x \in U$, we say that $\varphi$ is a \emph{chart around $x$}.

Let $M_1,M_2$ be smooth manifolds and $f:M_1\to M_2$ be a function. The function $f$ is said to be \emph{differentiable at $x \in M_1$} if there is a chart $\varphi$ of $M_1$ around $x$ and a chart $\psi$ of $M_2$ around $f(x)$ such that $$\psi\circ f \circ \varphi^{-1}$$ is differentiable at $\varphi(x)$. 
Then, $f$ is said to be \emph{differentiable}, if it is differentiable throughout $M_1$.
Similarly, we say that $f$ is differentiable of class $C^k$ if $\psi\circ f \circ \varphi^{-1}$ is of class $C^k$, for every pair of charts of $M_1$ and $M_2$ such that the image of $\varphi^{-1}$ and the domain of $\psi$ intersect.
Whether a function is differentiable at some point or is of class $C^k$ does not depend on the particular choice of charts.
The function  $\psi\circ f \circ \varphi^{-1}$ is also said to be a \emph{local representation of $f$}.
If $f$ is a bijection such that it is $C^k$ everywhere and whose inverse $f^{-1}$ is also $C^k$ everywhere, then $f$ is said to be a \emph{$C^k$ diffeomorphism}.

Let $M$ be a $n$-dimensional smooth manifold.
Let $C^{\infty}(M)$ denote the ring of $C^{\infty}$ real functions $g:M\to \Re$. A derivation of $M$ at $x$ is a function $v:C^{\infty}(M) \to \Re$ such that for every $g,h \in C^{\infty}(M)$, we have
$$
v(gh)= (v(g))h(x) + g(x)v(h).
$$
Given a $n$-dimensional smooth manifold $M$ and $x \in M$, we write $T_{x}M$ for the tangent space of $M$ at $x$, which is the subspace of derivations of $M$ at $x$. It is a basic fact that the dimension of $T_{x}M$ as a vector space coincides with  the dimension of $M$ as a smooth manifold. 

Let $f:M_1 \to M_2$ be a $C^1$ map between smooth manifolds. Then, at each $x \in M_1$, $f$ induces a linear map between $df_x:T_{x}M_1 \to T_{f(x)}M_2$ such that 
given $v \in T_{x}M_1$, $df_x(v)$ is the derivation of $M_2$ at $f(x)$ satisfying
$$
(df_x(v))(g)= v(g\circ f),
$$ 
for every $g \in C^{\infty}(N)$. The map $df_x$ is the \emph{differential map of $f$ at $x$}.
If the linear map $df_x$ is injective everywhere, then $f$ is said to be an \emph{immersion}.
Furthermore, if $f$ is a $C^k$ diffeomorphism with $k \geq 1$, then $df_x$ is a linear bijection for every $x$. Recall that in order to check whether $f$ is immersion, it is enough to check that the local representations of $f$ are immersions.

Now, suppose that $\alpha:(-\epsilon, \epsilon)\to M$ is a $C^{\infty}$ curve with $\alpha(0) = x$. 
Then $d\alpha _0(0) \in T_{x}M$.
Furthermore, $T_{x}M$ coincides with the set of velocity vectors of smooth curves passing through $x$. With  a slight abuse of notation, let us write $\alpha'(t) = d\alpha _0(t)$. With that, we have
\begin{align}\label{eq:vel_tan}
T_{x}M = \{ \alpha'(0) \mid \alpha:(-\epsilon,\epsilon)\to M, \alpha(0) = x, \alpha \text{ is } C^{1} \},
\end{align}
see more details in Proposition 3.23 and pages 68-71 in \cite{Lee2012}.
With this, we can compute a differential 
$df_x(v)$ by first selecting a $C^1$ curve $\alpha$ contained in $M$ with $\alpha(0) = x$, $\alpha'(0)=v$. Then, we have 
$df_x(v) = (f\circ \alpha)'(0)$, see Proposition 3.24 in \cite{Lee2012}.

A map $\iota:M_1\to M_2$ is said to be a \emph{$C^k$-embedding} if it is a $C^k$ immersion and a 
homeomorphism on its image (here, $\iota(M_1)$ has the subspace topology induced from $M_2$).
Now, suppose that, in fact,  $M_1 \subseteq M_2$ and let $\iota:M_1\to M_2$ denote the inclusion map, i.e., 
$\iota(x) = x$, for all $x \in M_1$.
If $\iota$ is a $C^k$ embedding, we say that 
$M_1$ is a \emph{$C^k$-embedded submanifold of $N$}.

We remark that when $M$ is a $m$-dimensional $C^k$-embedded submanifold of $\Re^n$, the requirement that 
$\iota$ be an a $C^k$ embedding has the following consequences. First, the topology of $M$ has to be the subspace topology of $\Re^n$, i.e., the open sets of $M$ are open sets of $\Re^n$ intersected with $M$. Now, let $\varphi: U \to \Re^m$ be a chart of $M$. Then, $\iota \circ \varphi^{-1} : \varphi(U) \to U$ is a $C^k$ diffeomorphism. That is, although $\varphi^{-1}$ is $C^\infty$ when saw as a map between $\varphi(U)$ and $M$, its class of differentiability might decrease\footnote{Here is an example of what can happen. Let $M$ be graph of the function $f(x) =\abs{x}$. $M$ is a differentiable manifold and to create  a maximal smooth atlas for $M$ we first start with a set $\mathcal{A}$ containing only the map $\varphi : M \to \Re$ that takes $(\abs{x},x)$ to $x$. At this point, conditions $(i),(ii),(iii)$ of the definition of atlas are satisfied.	Then, we add to $\mathcal{A}$ every map $\psi$ such that $\mathcal{A}\cup \{\psi\}$ still satisfies $(i),(ii),(iii)$. The resulting set must be a maximal smooth atlas. Following the definition of differentiability between manifolds, the  map $\varphi^{-1}$ is $C^\infty$ if we see it as a map between $\Re \to M$, since $\varphi \circ \varphi^{-1}(x) = x$. However, 
$\iota \circ \varphi^{-1}$ is not even a $C^1$ map, because $\abs{x}$ is not differentiable at $0$. \label{fnt:diff}  } when seen as a map between $U$ and $\Re^m$.
For embedded manifolds of $\Re^n$, as a matter of convention, we will always see the inverse of a chart $\varphi$ as a function whose codomain is $\Re^n$ and we will omit the embedding $\iota$.

Furthermore, whenever $M$ is a $C^k$-embedded submanifold of $\Re^n$, we will define tangent spaces in a more geometric way. Given $x \in M$, we will define 
$T_xM$ as the space of tangent vectors of $C^1$ curves that pass through $x$:
\begin{equation}\label{eq:txm}
T_xM = \{\alpha'(0)  \mid \alpha:(-\epsilon,\epsilon)\to \Re^n, \alpha(0) = x, \alpha \subseteq M, \alpha \text{ is } C^1 \},
\end{equation}
where $\alpha \subseteq M$ means that $\alpha(t) \in M$, for every $t \in (-\epsilon, \epsilon)$.
Here, since we have an ambient space, $\alpha'(0)$ is the derivative of $\alpha$ at $0$ in the usual sense.

Both definitions of tangent spaces presented so far are equivalent in the following sense. Let $\tilde T_xM$ denote the space of derivations of $M$ at $x$ and let $\iota:M\to \Re^n$ denote the inclusion map.
Then, $d\iota _x$ is a map between $\tilde T_xM$ and $T_x\Re^n$. Then, identifying $T_x\Re^n$ with 
$\Re^n$, 
it holds that $d\iota _x(\tilde T_xM) = T_xM$. In particular, $\tilde T_xM$ and $T_xM$ have the same dimension.

Finally, we recall that for smooth manifolds, the topological notion of \emph{connectedness} is equivalent to the notion of \emph{path-connectedness}, see Proposition 1.11 in \cite{Lee2012}. Therefore, a manifold $M$ is connected if and only if for every $x,y \in M$ there is a continuous curve $\alpha:[0,1]\to M$ such that $\alpha(0) = x$ and $\alpha(1) = y$.

\subsection{Graphs of differentiable maps}
For a real valued function $f:U\to \Re$ defined on $U \subseteq \Re^n$,
the graph of $f$ is defined by
\[
\graph f := \{(y,x) \in \Re\times U \mid y = f(x) \} \subseteq \Re^{n+1}.
\]
In item $(i)$ of the next proposition, for the sake of completeness, we give a proof of the well-known fact that if $f$ is a $C^k$ function, then $\graph f$ must be a $C^k$-embedded manifold. 
In item $(ii)$ we observe the fact, also known but perhaps less well-known, that the converse also holds.
This is important for us because if we know that $f$ is $C^1$ but not $C^2$, then this creates an obstruction to the existence of certain maps between 
$\graph f$ and $C^2$ manifolds.

\begin{proposition}\label{prop:graph}
For $k \geq 1$,
let $f:U\to \Re$ be a $C^1$ function defined on an open subset $U$ of $\Re^n$.
\begin{enumerate}
\item[(i)]
If $f$ is $C^k$ on an open subset $V$ of $U$,
then
$\graph f|_V$ is an $n$-dimensional $C^k$-embedded submanifold of $\Re^{n+1}$.
\item[(ii)] Suppose that a subset $M$ of $\graph f$ is an $n$-dimensional $C^k$-embedded submanifold of $\Re^{n+1}$, with $k \geq 1$.
Then $f$ is $C^k$ on the open set $\pi_U(M)$, where $\pi_U:\Re\times U \to U$ is the projection onto $U$.
\end{enumerate}
\end{proposition}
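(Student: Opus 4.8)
The plan is to prove the two items separately, in each case reducing the manifold-theoretic statement to a statement about the local representation of $\graph f$ as a graph over $\Re^n$.

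For item $(i)$, the strategy is to exhibit a single global chart. Define $\varphi:\graph f|_V\to\Re^n$ by $\varphi(f(x),x)=x$; its inverse is $x\mapsto(f(x),x)$. One checks directly that $\varphi$ is a homeomorphism onto the open set $V$ (continuity both ways is immediate since the projection $\pi_U$ and $x\mapsto(f(x),x)$ are continuous), so $\{\varphi\}$ satisfies conditions $(i)$ and $(iii)$ of the atlas definition, and $(ii)$ is vacuous for a single chart; hence it extends to a maximal smooth atlas and $\graph f|_V$ is an $n$-dimensional smooth manifold. It remains to verify that the inclusion $\iota:\graph f|_V\to\Re^{n+1}$ is a $C^k$-embedding. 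Since $\iota\circ\varphi^{-1}(x)=(f(x),x)$ and $f$ is $C^k$ on $V$, the map $\iota\circ\varphi^{-1}$ is $C^k$; its differential at each $x$ is $v\mapsto(\nabla f(x)^\T v,v)$, which is injective, so $\iota$ is a $C^k$ immersion. Finally $\iota$ is a homeomorphism onto its image with the subspace topology because $\varphi$ already is, so $\iota$ is a $C^k$-embedding.

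For item $(ii)$, the key observation is that the projection $\pi:\Re\times U\to\Re^n$ restricts to a $C^\infty$ map $\pi|_M:M\to\Re^n$ whose image is $\pi_U(M)$, and this restriction is a bijection from $M$ onto $\pi_U(M)$ because $M\subseteq\graph f$. I would first argue $\pi_U(M)$ is open: fix $x_0\in\pi_U(M)$ and a chart $\psi:W\to\Re^n$ of $M$ around $(f(x_0),x_0)$; the local representation $\pi|_M\circ\psi^{-1}:\psi(W)\to\Re^n$ is $C^1$ (since $M$ is $C^1$-embedded, $\psi^{-1}$ viewed into $\Re^{n+1}$ is $C^1$, and $\pi$ is linear), and its differential at $\psi(f(x_0),x_0)$ is injective — indeed it is the composition of $d(\psi^{-1})$, which is injective into $T_{(f(x_0),x_0)}M\subseteq\Re^{n+1}$, with the projection $\Re^{n+1}\to\Re^n$, and the latter is injective on that tangent space because any curve in $M$ is also in $\graph f$, so a tangent vector $(a,v)$ with $v=0$ forces $a=0$. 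Hence $\pi|_M\circ\psi^{-1}$ is a $C^1$ map between open subsets of $\Re^n$ with invertible differential, so by the inverse function theorem it is an open map near $\psi(f(x_0),x_0)$; thus $\pi_U(M)$ contains a neighborhood of $x_0$, and is open. Moreover the inverse function theorem gives that, locally, $\pi|_M\circ\psi^{-1}$ has a $C^1$ inverse, and upgrading $C^1$ to $C^k$: since $M$ is $C^k$-embedded one may take the charts so that $\psi^{-1}$ (into $\Re^{n+1}$) is $C^k$, whence $\pi|_M\circ\psi^{-1}$ is $C^k$ with a $C^k$ local inverse. It follows that $(\pi|_M)^{-1}:\pi_U(M)\to M\subseteq\Re^{n+1}$ is $C^k$. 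But $(\pi|_M)^{-1}(x)=(f(x),x)$ by the graph property, so its first coordinate $x\mapsto f(x)$ is $C^k$ on $\pi_U(M)$, which is exactly the claim.

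The main obstacle is item $(ii)$, specifically the bookkeeping around differentiability classes: one must be careful that the charts $\psi$ of the $C^k$-embedded submanifold $M$ are chosen so that $\psi^{-1}$, regarded as a map into the ambient $\Re^{n+1}$, is genuinely $C^k$ (this is the content of the embedding condition, cf.\ footnote \ref{fnt:diff}), and then that the inverse function theorem is applied in the $C^k$ category to conclude $(\pi|_M)^{-1}$ is $C^k$ rather than merely $C^1$. The injectivity of the differential of the projection restricted to $T_{(f(x),x)}M$ is the geometric heart of the argument, and it hinges precisely on $M\subseteq\graph f$: a nonzero tangent vector cannot be ``vertical.''
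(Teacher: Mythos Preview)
Your proposal is correct and follows essentially the same route as the paper: for $(i)$ both use the single global chart $x\mapsto(f(x),x)$ and check that the inclusion is a $C^k$ immersion, and for $(ii)$ both compose a chart of $M$ with the projection $\pi_U$, show this composite has invertible differential, and apply the $C^k$ inverse function theorem to recover $f$ locally as a $C^k$ function. The only cosmetic difference is that the paper verifies invertibility by the matrix computation $J_\psi=J_f J_\varphi$ (explicitly invoking the $C^1$ hypothesis on $f$ via the chain rule), whereas you phrase the same fact geometrically as ``no vertical tangent vector''; note that your argument also relies on $f\in C^1$ at exactly that step, so it would be worth saying so explicitly.
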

\begin{proof}	
$(i)$ The proof here is essentially the one contained Example 1.30 and Proposition 5.4 of \cite{Lee2012}, except that here we take into account the level of smoothness of the embedding. 

First, let $M=\graph(f|_V)$ and consider the subspace topology inherited from $\Re^{n+1}$ (again, see Examples 1.3 and 1.30 in \cite{Lee2012} for more details).
With the subspace topology, the map $\varphi:V\to M$, given by 
$$\varphi(x)=(f(x),x)$$ 
is a homeomorphism between $V$ and $M$, whose inverse is the projection restricted to $M$, that is $\varphi^{-1}(f(x),x)=x$.
Furthermore, $\varphi^{-1}$ induces a maximal smooth atlas of $M$ making $\varphi^{-1}:M\to V$ a chart\footnote{The idea is the same as in Footnote \ref{fnt:diff}, we start with $\mathcal{A} = \{\varphi^{-1}\}$ and add every map  $\psi$ for which $\mathcal{A}\cup \{\psi\}$ still satisfies properties $(i),(ii),(iii)$ of the definition of atlas. }.
We now check that the inclusion $\iota:M \to \Re^{n+1}$ is a $C^k$ embedding. 
A local representation for $\iota$ is obtained 
by considering $\iota \circ \varphi:V\to\Re^{n+1}$, which shows 
that $\iota$ is a $C^k$ differentiable map.
The  inverse $\iota^{-1}:\iota(M) \to M$ is given by restricting the identity map in $\Re^{n+1}$ to $M$. 
Since the topology on $M$ is the subspace topology, this establishes that $\iota$ is an homeomorphism.

Furthermore, since the $(n+1)\times n$ Jacobian matrix $J_{\iota \circ \varphi}$ of the representation of $\iota$ has rank $n$, we see that $\iota$ is an immersion. Hence, $M$ is a $C^k$-embedded submanifold of $\Re^{n+1}$.

$(ii)$
Take $x_0 \in \pi_U(M)$.
Let $\Phi:V \to \Re^n$ be a chart of $M$ around $(f(x_0),x_0)$.
We can write the map $\Phi^{-1}$ as
$$\Phi^{-1}(z)=(\psi(z),\varphi(z)) \in \Re\times U ~\text{ for }~ z \in \Phi(V),$$
for functions $\psi: \Phi(V) \to \Re$, $\varphi: \Phi(V) \to U$.
Since $\Im\Phi^{-1} \subseteq M \subseteq \graph f$, we have $\psi(z)=f(\varphi(z))$ for all $z \in \Phi(V)$.
Then we obtain a local representation $\tilde{\iota}:\Phi(V)\subseteq \Re^n\to \Re^{n+1}$ of the inclusion map $\iota:M\to\Re^{n+1}$ as follows:
\[
\tilde{\iota}(z) := \iota\circ\Phi^{-1} = (\psi(z),\varphi(z)) = (f \circ\varphi(z),\varphi(z)).
\]
Since $M$ is $C^k$-embedded, $\varphi$ and $\psi$ are $C^k$ when seen as maps $\Phi(V)\to \Re$ and $\Phi(V)\to \Re^n$, respectively.
Let $z_0=\Phi((f(x_0),x_0))$. Then $\varphi(z_0)=x_0$ since $$(f(x_0),x_0)=\Phi^{-1}(z_0)=(\psi(z_0),\varphi(z_0)).$$
Note that $\rank(J_{\tilde{\iota}}(z_0))=n$ holds because $\iota$ is an immersion.
On the other hand, since $f$ is $C^1$ by the assumption, it follows by the chain rule for the function $\psi=f\circ \varphi$ that
\[
J_{\psi}(z_0) = J_{f}(\varphi(z_0))J_{\varphi}(z_0) = J_{f}(x_0)J_{\varphi}(z_0).
\]
This means that each row of $J_{\psi}(z_0)$ is a linear combination of rows of $J_{\varphi}(z_0)$.
Therefore, we conclude that
\[ n = \rank J_{\tilde\iota}(z_0) = \rank (J_{\psi}(z_0)^T,J_{\varphi}(z_0)^T)^T = \rank J_{\varphi}(z_0).\]
Namely, the $n \times n$ matrix $J_{\varphi}(z_0)$ is nonsingular.
Since ${\varphi}$ is $C^{k}$, the inverse function theorem states that there exists a $C^{k}$ inverse ${\varphi}^{-1}:W\to \Re^n$ defined on a neighborhood $W$ of ${\varphi}(z_0)=x_0$.
Then, we conclude that the function
$$\psi\circ \varphi^{-1} = f\circ \varphi\circ \varphi^{-1} = f$$
is $C^k$ on $W$.

To conclude, we will show that $\pi_U(M)$ is open.
Since $\varphi^{-1}(W)$ is contained in the domain $\Phi(V)$ of the map $\varphi$,
it follows that $W=\varphi\circ\varphi^{-1}(W)\subseteq \varphi(\Phi(V))$.
Now, let $z \in \Phi(V)$. By definition, we have
$$(\psi(z),\varphi(z))=\Phi^{-1}(z)\in V,$$
which shows that $\varphi(z) \in \pi _U(V)$.
Therefore, $\varphi(\Phi(V))\subseteq \pi_U(V)\subseteq \pi_U(M)$. Hence, we have $W\subseteq \pi_U(M)$ and so $\pi_U(M)$ is open in $\Re^n$, since $x_0$ was arbitrary.
\end{proof}

Given a diffeomorphism $A$ between two graphs of $C^1$ maps $f,g:U\to\Re$,
the next proposition shows a relation of the categories of differentiability of $f$ and $g$ through the diffeomorphism $B:U\to U$ defined by
$$
B(x)=\pi_U(A(f(x),x))
$$
where $\pi_U:\Re\times U\to U$ is the projection onto $U$.
The map $B$ will play a key role in the proof of our main result applied with $U=\Re^n\setminus\{0\}$, $f(x)=\norm{x}_p$ and $g(x)=\norm{x}_q$.
We give an illustration of the map $B$ in Figure~\ref{fig:B}.

\begin{figure}
	\centering
\includegraphics[scale=0.6]{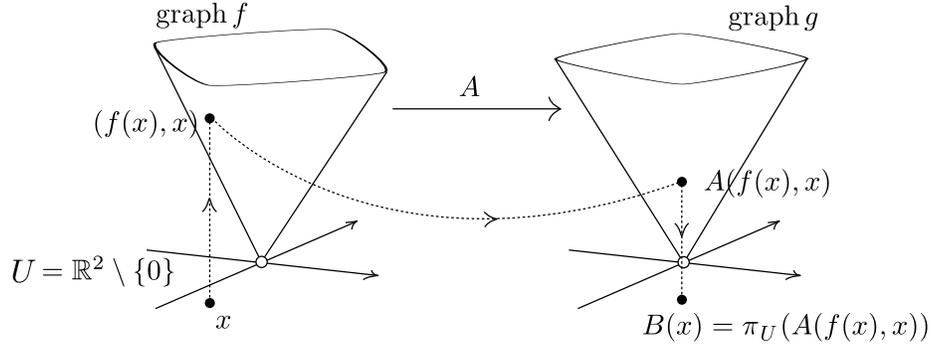}
\caption{Illustration of the map $B(x)=\pi_U(A(f(x),x))$}\label{fig:B}
\end{figure}

\newcommand{\mapA}{A}
\newcommand{\mapB}{B}

\begin{proposition}\label{prop:C^k}
	Let $f,g:U\to \Re$ be $C^1$ maps defined on an open subset $U$ of $\Re^n$.
	Suppose that $\mapA:\Re^{n+1}\to\Re^{n+1}$ is a $C^\infty$ diffeomorphism such that $\mapA(\graph f)=\graph g$.
	\begin{enumerate}
		\item[(i)]
		The map $\mapB:U\to U$, $\mapB(x):=\pi_U(\mapA(f(x),x))$ is a $C^1$ diffeomorphism, where $\pi_U:\Re\times U\to U$ satisfies $\pi_U(y,x)=x$.
		\item[(ii)]
		For $k \geq 1$, $f$ is $C^k$ on a neighborhood of $x$ if and only if $g$ is $C^k$ on a neighborhood of $\mapB(x)$.
	\end{enumerate}
\end{proposition}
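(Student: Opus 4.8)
The plan is to reduce everything to Proposition~\ref{prop:graph}, together with the observation that $x\mapsto(f(x),x)$ and $y\mapsto(g(y),y)$ are $C^1$ parametrizations of $\graph f$ and $\graph g$ whose inverse is the smooth projection $\pi_U$. Write $\varphi_f\colon U\to\Re^{n+1}$, $\varphi_f(x)=(f(x),x)$, and $\varphi_g\colon U\to\Re^{n+1}$, $\varphi_g(y)=(g(y),y)$; these are $C^1$ because $f,g$ are, and $\pi_U\circ\varphi_f=\mathrm{id}_U=\pi_U\circ\varphi_g$. Since $A(\graph f)=\graph g\subseteq\Re\times U$, for every $x\in U$ the point $A(f(x),x)$ lies in $\graph g$, so $B(x)=\pi_U(A(f(x),x))\in U$ (this is why $B$ maps $U$ into $U$) and in fact $A(f(x),x)=(g(B(x)),B(x))$. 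For part~(i), I would note that $B=\pi_U\circ A\circ\varphi_f$ is a composition of $C^1$ maps ($A$ is $C^\infty$, $\pi_U$ is linear), hence $C^1$. Applying the same reasoning to $A^{-1}$, which is a $C^\infty$ diffeomorphism with $A^{-1}(\graph g)=\graph f$, the map $B'\colon U\to U$, $B'(y)=\pi_U(A^{-1}(g(y),y))$, is also $C^1$. Finally, applying $A^{-1}$ to $A(f(x),x)=(g(B(x)),B(x))$ gives $(f(x),x)=A^{-1}(g(B(x)),B(x))$, whence $B'(B(x))=x$, and the symmetric computation gives $B(B'(y))=y$. Thus $B$ is a bijection with $B^{-1}=B'$ of class $C^1$, i.e.\ a $C^1$ diffeomorphism.

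For part~(ii), I would prove the implication ``$f$ is $C^k$ on a neighborhood of $x$ $\Rightarrow$ $g$ is $C^k$ on a neighborhood of $B(x)$'' and obtain the converse by symmetry. Suppose $f$ is $C^k$ on an open set $V\ni x$. By Proposition~\ref{prop:graph}(i), $\graph(f|_V)$ is an $n$-dimensional $C^k$-embedded submanifold of $\Re^{n+1}$. Since $A$ is a $C^\infty$ diffeomorphism of $\Re^{n+1}$, the image $M':=A(\graph(f|_V))$ is again an $n$-dimensional $C^k$-embedded submanifold: a chart is $p\mapsto\pi_U(A^{-1}(p))$, with inverse $x'\mapsto A(f(x'),x')$, which is a $C^k$ immersion (the $C^k$ immersion $x'\mapsto(f(x'),x')$ composed with the $C^\infty$ diffeomorphism $A$) and a homeomorphism onto its image; I would just record this as the routine fact that a $C^\infty$ diffeomorphism carries $C^k$-embedded submanifolds to $C^k$-embedded submanifolds of the same dimension. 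Moreover $M'\subseteq A(\graph f)=\graph g$, so Proposition~\ref{prop:graph}(ii) applies and yields that $g$ is $C^k$ on the open set $\pi_U(M')$. Since $x\in V$ we have $(f(x),x)\in\graph(f|_V)$, hence $A(f(x),x)\in M'$ and $B(x)=\pi_U(A(f(x),x))\in\pi_U(M')$; so $g$ is $C^k$ near $B(x)$.

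The converse then follows by applying the implication just proved with $(g,f,A^{-1})$ in place of $(f,g,A)$: the associated map is $y\mapsto\pi_U(A^{-1}(g(y),y))=B'(y)=B^{-1}(y)$, so we get ``$g$ is $C^k$ near $y_0$ $\Rightarrow$ $f$ is $C^k$ near $B^{-1}(y_0)$''; taking $y_0=B(x)$ gives ``$g$ is $C^k$ near $B(x)$ $\Rightarrow$ $f$ is $C^k$ near $x$''. The one point that needs care — and the reason Proposition~\ref{prop:graph}(ii) is indispensable — is that one cannot simply write $g=(g\circ B)\circ B^{-1}$ to transport regularity, because part~(i) only guarantees that $B^{-1}$ is $C^1$, not $C^k$; the submanifold argument extracts the $C^k$-regularity of $g$ from the $C^k$-embedded submanifold $M'\subseteq\graph g$ without any a priori regularity of $B^{-1}$ beyond $C^1$. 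I expect that verifying cleanly that $A(\graph(f|_V))$ is genuinely $C^k$-embedded (and not accidentally less smooth) is the most technical step, but it is entirely mechanical from the definitions in Section~\ref{sec:diff}.
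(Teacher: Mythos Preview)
Your argument is correct and follows essentially the same route as the paper: for (i) you verify that $B$ and the candidate inverse $B'(y)=\pi_U(A^{-1}(g(y),y))$ are $C^1$ and mutually inverse, and for (ii) you push $\graph(f|_V)$ through $A$ to a $C^k$-embedded submanifold of $\graph g$ and invoke Proposition~\ref{prop:graph}(ii), then swap $A$ for $A^{-1}$ for the converse. Your extra justification that $A(\graph(f|_V))$ is $C^k$-embedded and your remark on why the naive composition $g=(g\circ B)\circ B^{-1}$ fails are welcome elaborations, but do not change the strategy.
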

\begin{proof}
	$(i)$
	Since $f$ is $C^1$ while $\pi_U$ and $\mapA$ are  $C^\infty$ maps, it is must be the case that $\mapB(x)=\pi_U(\mapA(f(x),x))$ is $C^1$.
	
	Let us check that the inverse of  $\mapB$ is the map $\mapB^{-1}(y)=\pi_U(\mapA^{-1}(g(y),y))$.
	Denote $$\mapB'(y)=\pi_U(\mapA^{-1}(g(y),y)).$$ For any $x \in U$, the relation $A(\graph f)=\graph g$ implies the existence of $y \in U$ such that $\mapA(f(x),x)=(g(y),y)$. Then we have
	$$
	\mapB(x)=\pi_U(\mapA(f(x),x))=\pi_U(g(y),y)=y.
	$$
	and, therefore,
	$$
	\mapB'(\mapB(x))=\mapB'(y)=\pi_U(\mapA^{-1}(g(y),y))=\pi_U(f(x),x)=x.
	$$
	Similarly, we obtain $\mapB(\mapB'(y))=y$. Hence, $\mapB^{-1}(y)=\mapB'(y)$ holds.
	
	Since $\mapB^{-1}(y)=\pi_U(\mapA^{-1}(g(y),y))$ is also $C^1$, we conclude that $\mapB$ is a $C^1$ diffeomorphism.
	
	$(ii)$
	If $f$ is $C^k$ on a neighborhood $V$ of $x$, then $\graph(f|_V)$ is an $n$-dimensional $C^k$-embedded submanifold of $\Re^{n+1}$ by Proposition~\ref{prop:graph} $(i)$.
	Then, by the assumption on $\mapA$, the set $M:=\mapA(\graph f|_V)$ is also an $n$-dimensional $C^k$-embedded submanifold of $\Re^{n+1}$ which satisfies $M\subseteq \graph g$.
	Therefore Proposition~\ref{prop:graph} $(ii)$ implies that $g$ is $C^k$ on the open set $\pi_U(M)=\pi_U(\mapA(\graph f|_V))$ which contains the point $\pi_U(\mapA(f(x),x))$.
	
	The converse of the assertion follows by applying the same argument to the diffeomorphism $\mapA^{-1}$ because $\mapA^{-1}(\graph g)=\graph f$ and $\pi_U(\mapA^{-1}(g(y),y))=x$ holds for $y=B(x) = \pi_U(\mapA(f(x),x))$.
\end{proof}

\subsection{The Gauss map}\label{sec:gauss}
In this subsection, let $M$ be a $C^k$-embedded submanifold of $\Re^n$ with dimension $n-1$ and $k \geq 1$. 
In this case, $M$ is sometimes called a \emph{hypersurface}  and when $n = 3$, $M$ is called a \emph{surface}. The differential geometry of surfaces is, of course, a classical subject discussed in many books, e.g., \cite{Carmo1976}.

In the theory of surfaces, a \emph{Gauss map} is a continuous function that associates to $x \in M$ a unit vector which is orthogonal to $T_xM$.
Unless $M$ is an orientable surface, it is not possible to construct a Gauss map that is defined globally over $M$. However, given any $x \in M$, it is always possible to construct a Gauss map in a neighborhood of $x$.
For the sake of self-containment, we will give a brief account of the construction of the Gauss map for hypersurfaces.

For what follows, we suppose that $\Re^n$ is equipped with some inner product $\inProd{\cdot}{\cdot}$ and the norm is given by
 $\norm{x} = \sqrt{\inProd{x}{x}}$, for all $x \in \Re^n$. 
Recalling \eqref{eq:txm}, $T_xM$ is seen as a subspace of $\Re^n$ and we will equip $T_xM$ with the same inner product $\inProd{\cdot}{\cdot}$.

\begin{definition}
	Let $M$ be a $C^k$-embedded submanifold of $\Re^n$ and let $x \in M$.  A $C^r$ Gauss map around $x$ is a $C^r$ function $\GS: U \to \Re^n$ such that $U\subseteq M$ is a neighborhood of $x$ in $M$ and 
	$$
	\GS(x) \in (T_xM)^\perp \quad \text{ and }\quad \norm{\GS(x)} = 1,
	$$
	for all $x \in U$, where $(T_xM)^\perp$ is the orthogonal complement to $T_xM$.
\end{definition}
For what follows, let $x^1,\ldots, x^n \in \Re^n$ and let 
$\det(x^1,\ldots,x^n)$ denote the determinant of the matrix such that its $i$-th column is given by $x^i$. 
Since the determinant is a multilinear function, if we fix the first $n -1 $ elements, we obtain a linear functional $f$ such that 
$$
f(x) = \det(x^1,\ldots,x^{n-1},x).
$$
Since $f$ is a linear functional, there is a unique vector $\Lambda(x^1,\ldots,x^{n-1}) \in \Re^n$ satisfying 
$$\inProd{\Lambda(x^1,\ldots,x^{n-1}) }{x} = f(x),
$$ 
for all $x \in \Re^n$. 
Furthermore, $\Lambda(x^1,\ldots,x^{n-1}) = 0$ is zero if and only if the $x^i$ are linearly dependent.

\begin{proposition}\label{prop:gauss}
	Let $M\subseteq \Re^n$ be an $(n-1)$ dimensional $C^k$-embedded manifold, with $k \geq 1$. Then, for every chart $\varphi:U\to \Re^{n-1}$,  
	there exists a $C^{k-1}$ local Gauss map of $M$  defined over $U$.
\end{proposition}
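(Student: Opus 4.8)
The plan is to build the Gauss map explicitly from the given chart using the operator $\Lambda$ introduced just above the statement. Fix a chart $\varphi:U\to\Re^{n-1}$ and write $\psi:=\varphi^{-1}:\varphi(U)\to\Re^n$ for its inverse, viewed (per the convention for $C^k$-embedded submanifolds) as a map into the ambient space $\Re^n$; by hypothesis $\psi$ is a $C^k$ immersion. For $z\in\varphi(U)$ let $v_i(z):=\partial_i\psi(z)\in\Re^n$, $i=1,\dots,n-1$, denote the columns of the Jacobian $J_\psi(z)$, and define for $x\in U$ with $z=\varphi(x)$
\[
\GS(x):=\frac{\Lambda\bigl(v_1(z),\dots,v_{n-1}(z)\bigr)}{\norm{\Lambda\bigl(v_1(z),\dots,v_{n-1}(z)\bigr)}}.
\]
I claim this $\GS:U\to\Re^n$ is the desired $C^{k-1}$ Gauss map, and I would verify its three defining properties in turn.

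First, regularity: each $v_i=\partial_i\psi$ is $C^{k-1}$ on $\varphi(U)$ because $\psi$ is $C^k$; the map $\Lambda:(\Re^n)^{n-1}\to\Re^n$ is multilinear, hence $C^\infty$; and since $\psi$ is an immersion, $\rank J_\psi(z)=n-1$, so $v_1(z),\dots,v_{n-1}(z)$ are linearly independent and $\Lambda(v_1(z),\dots,v_{n-1}(z))\neq 0$ by the stated nonvanishing property of $\Lambda$, which makes $w\mapsto w/\norm{w}$ a $C^\infty$ operation on the relevant region. Composing, $\GS\circ\varphi^{-1}$ is $C^{k-1}$ on $\varphi(U)$; since $\varphi$ is a chart of $M$, this is exactly what it means for $\GS:U\to\Re^n$ to be $C^{k-1}$. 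Second, $\norm{\GS(x)}=1$ by construction. Third, orthogonality: the defining relation of $\Lambda$ gives $\inProd{\Lambda(v_1,\dots,v_{n-1})}{v_j}=\det(v_1,\dots,v_{n-1},v_j)=0$ for each $j\le n-1$, the matrix having a repeated column; and the curve $t\mapsto\psi(z+te_i)$ lies in $M$ (indeed in $U$, as $\varphi(U)$ is open), passes through $x$ at $t=0$, and has velocity $v_i(z)$, so by the velocity-vector description \eqref{eq:txm} each $v_i(z)$ lies in $T_xM$; being $n-1$ linearly independent vectors in the $(n-1)$-dimensional space $T_xM$ they form a basis of it, and therefore $\GS(x)\in(T_xM)^\perp$.

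The only genuinely delicate point is the bookkeeping of smoothness: as the earlier footnote warns, $\varphi^{-1}$, although $C^\infty$ as a map into $M$, is only guaranteed to be $C^k$ as a map into $\Re^n$, so differentiating it once to obtain the $v_i$ costs exactly one order of regularity — this is precisely why the construction lands in $C^{k-1}$ and no better. The second thing to be careful about is the passage from ``$v_i(z)\in T_xM$'' to ``the $v_i(z)$ span $T_xM$'', which genuinely uses both the immersion hypothesis (linear independence via the rank of $J_\psi(z)$) and the dimension count $\dim T_xM=n-1$. Everything else — the multilinearity and hence smoothness of $\Lambda$, the ``repeated column'' determinant computation, and the nonvanishing of $\Lambda$ on linearly independent tuples — follows immediately from the properties of $\Lambda$ recorded right before the statement, so I do not anticipate any real obstacle beyond these routine verifications.
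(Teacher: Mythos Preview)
Your proposal is correct and follows essentially the same construction as the paper: you form the Gauss map as the normalized $\Lambda$ applied to the partial derivatives of $\varphi^{-1}$ (viewed as a map into $\Re^n$), invoking the immersion property to guarantee nonvanishing and the $C^k$ regularity of $\varphi^{-1}$ to obtain $C^{k-1}$ smoothness. If anything, you are more explicit than the paper in spelling out the orthogonality via the repeated-column determinant identity and in justifying that the $v_i$ span $T_xM$.
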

\begin{proof}
	Let $\varphi:U \to \Re^{n-1}$ be a chart of $M$.
	Then, $\varphi^{-1}$ is a function with domain $\varphi(U)$ (which is an open set of $\Re^{n-1}$) and codomain $\Re^n$.
	Let $u \in U$. It is well-known that the partial derivatives of $\varphi^{-1}$ at $\varphi(u)$ are a basis for $T_uM$, e.g., page 60 and Proposition 3.15 in \cite{Lee2012}.
	Let $v^i(u)$ be the partial derivative of $\varphi^{-1}$ at $\varphi(u)$ with respect the $i$-th variable.
	We define a Gauss map $N$ over $U$ by letting
	$$
	N(x) = \frac{\Lambda(v^1(u),\ldots,v^{n-1}(u))}{\norm{\Lambda(v^1(u),\ldots,v^{n-1}(u))}}.
	$$
	Since the $v^i(u)$ are a basis for $T_uM$, $\Lambda(v^1(u),\ldots,v^{n-1}(u))$ is never zero. 
	In addition, because  $\varphi^{-1}$ is of class $C^k$, 
	$N$ must be of class $C^{k-1}$. 
	
\end{proof}

\subsection{A lemma on hyperplanes and embedded submanifolds}
Let $M$ be a connected $C^1$-embedded $n-1$ dimensional submanifold of $\Re^n$ (i.e., a hypersurface) that is contained in a finite union of distinct hyperplanes $H_1,\ldots, H_r$.
The goal of this section is to prove that $M$ must be entirely contained in one of the hyperplanes.
The intuition comes from the case 
$n = 3$: a surface in $\Re^3$ cannot, say, be contained in $H_1\cup H_2$ and also intersect both $H_1$ and $H_2$ because it would generate a ``corner'' at the intersection $M\cap H_1 \cap H_2$, thus destroying smoothness. This is illustrated in Figure \ref{fig:hyp}.

This is probably a well-known differential geometric fact but we could not find a precise reference, so we give a proof here.
Nevertheless, our discussion is related to the following classical fact: 
a point in a surface for which the derivative of the Gauss map vanishes is called a \emph{planar point} and 
a connected surface in $\Re^3$ such that all its points are planar must be a piece of a plane, see Definitions 7, 8 and the proof of Proposition 4 of  Chapter 3 of \cite{Carmo1976}.

In our case, the fact that $M$ is contained in a finite number of hyperplanes hints that the image of any  Gauss map of  $M$ should be confined to the directions that are orthogonal to those hyperplanes. This, by its turn, suggests that the derivative of $N$ should vanish everywhere, i.e., all points must be planar.
In fact, our proof is inspired by the proof of  Proposition 4 of  Chapter 3 of \cite{Carmo1976} and we will use the same compactness argument at the end.

\begin{figure}
	\centering
\includegraphics[scale=0.3]{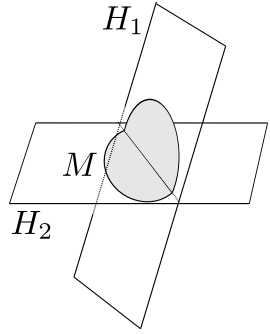}
\caption{A surface $M$ cannot be smooth if it is connected, contained in $H_1 \cup H_2$, but not entirely contained in neither $H_1$ nor $H_2$.}\label{fig:hyp}
\end{figure}

To start, we observe that the tangent of a curve contained in $H_1,\ldots, H_r$ must also be contained in those hyperplanes.
\begin{proposition}\label{prop:curve}
Let $H_i = \{a_i\}^\perp$ be hyperplanes  in $\Re^n$ for 
$i = 1, \ldots, r$.
Suppose that a $C^1$ curve $\alpha:(-\epsilon, \epsilon) \to \Re^n$ is contained in $X = \bigcup _{i=1}^r H_i$.
Then, $\alpha '(0) \in X$.
\end{proposition}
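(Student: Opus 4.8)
The plan is to reduce the claim to a one-variable statement about the single coordinate function $t \mapsto \langle \alpha(t), a_i \rangle$ for each $i$. Fix an index $i$ and define $\phi_i:(-\epsilon,\epsilon)\to\Re$ by $\phi_i(t) = \inProd{\alpha(t)}{a_i}$. Since $\alpha$ is $C^1$, each $\phi_i$ is $C^1$, and $\phi_i'(0) = \inProd{\alpha'(0)}{a_i}$; thus proving $\alpha'(0)\in X$ amounts to showing that $\phi_i'(0) = 0$ for at least one $i$. The hypothesis that $\alpha$ is contained in $X = \bigcup_{i=1}^r H_i$ says precisely that for every $t$ we have $\phi_i(t) = 0$ for some $i = i(t)$, i.e.\ $\prod_{i=1}^r \phi_i(t) = 0$ for all $t$ in the interval.

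The key step is then a pigeonhole/Baire-type argument on the interval $(-\epsilon,\epsilon)$. Consider the closed sets $Z_i = \phi_i^{-1}(0)$; their union is all of $(-\epsilon,\epsilon)$, so by the Baire category theorem at least one $Z_i$ has nonempty interior, i.e.\ there is an open subinterval $(a,b)$ on which $\phi_i$ vanishes identically. A cleaner route avoiding Baire: I would argue that $0$ must be an accumulation point of $Z_i$ for some $i$ (since the $Z_i$ cover a neighborhood of $0$ and there are finitely many of them, one of them must contain points arbitrarily close to $0$ — indeed, pick a sequence $t_k\to 0$ with $t_k\neq 0$; infinitely many $t_k$ lie in a common $Z_i$). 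Now if $0 \in Z_i$, then $Z_i$ contains $0$ and a sequence $t_k\to 0$, so by Rolle/the mean value theorem (or directly from the definition of derivative applied along the sequence $t_k$, using $\phi_i(0)=\phi_i(t_k)=0$) we get $\phi_i'(0) = \lim_k \frac{\phi_i(t_k)-\phi_i(0)}{t_k} = 0$. If instead $0\notin Z_i$ for that particular $i$, we need a little more care — but in fact we may as well relabel so that we extract the sequence within the set of $t_k$'s and note that whichever $Z_i$ captures infinitely many of them, if $\phi_i(0)\neq 0$ then by continuity $\phi_i$ is bounded away from $0$ near $0$, contradicting $t_k\in Z_i$, $t_k\to 0$. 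Hence the capturing index satisfies $\phi_i(0)=0$ as well, and the difference-quotient computation gives $\phi_i'(0)=0$.

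Putting this together: there is an index $i$ with $\inProd{\alpha'(0)}{a_i} = \phi_i'(0) = 0$, hence $\alpha'(0) \in H_i \subseteq X$, which is the claim.

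The main obstacle I anticipate is purely bookkeeping: making sure the chosen index $i$ simultaneously satisfies $\phi_i(0) = 0$ (so that $0\in Z_i$) and captures a sequence $t_k\to 0$ of points of $Z_i$, so that the difference quotient is legitimately $0/t_k = 0$ at each step. The finiteness of $r$ is what makes this work — with infinitely many hyperplanes one could imagine $\alpha$ weaving through them without any single one being "tangent." Everything else (continuity of $\phi_i$, the difference-quotient limit) is routine, and no compactness beyond the trivial pigeonhole on $\{1,\dots,r\}$ is needed.
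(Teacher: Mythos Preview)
Your argument is correct. The pigeonhole step (pick $t_k\to 0$, pass to a subsequence lying in a single $Z_i$), the continuity step ($\phi_i(0)=0$ since $Z_i$ is closed), and the difference-quotient step ($\phi_i'(0)=\lim_k 0/t_{k}=0$) all go through cleanly. The detour through Baire is unnecessary, as you yourself note; the sequence version is the right one.

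Your route differs from the paper's. The paper first splits the hyperplanes into those containing $\alpha(0)$ (say $H_1,\dots,H_s$) and those not, discards the latter by continuity of $\alpha$, and then argues by contradiction: if $\inProd{\alpha'(0)}{a_i}\neq 0$ for every $i\leq s$, then by continuity of $\alpha'$ the same holds on a small interval, and the mean value theorem forces $\inProd{\alpha(t)}{a_i}\neq 0$ for all $i\leq s$ at some nearby $t$, so $\alpha(t)\notin X$. Your approach is more elementary in one respect: you never use continuity of $\alpha'$, only differentiability of $\phi_i$ at $0$ and continuity of $\phi_i$, so your proof actually works under the weaker hypothesis that $\alpha$ is merely differentiable at $0$ (and continuous). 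The paper's argument, on the other hand, yields the slightly sharper conclusion that $\alpha'(0)$ lies in one of the hyperplanes \emph{through $\alpha(0)$}, though this is not needed downstream.
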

\begin{proof}
Changing the order of the hyperplanes if necessary, we may assume that 
\begin{align*}
\alpha(0) \in H_1\cap \cdots \cap H_s \\
\alpha(0) \not \in H_{s+1}, \ldots, H_r.
\end{align*}	
Since $\alpha$ is contained in $X$, we have $s \geq 1$.
Furthermore, because $\alpha$ is continuous, there is 
$\hat \epsilon > 0$ such that 
\begin{equation}\label{eq:aux1}
\alpha(\epsilon) \not \in H_{s+1}, \ldots,  H_r,
\end{equation}	
for $-\hat \epsilon < \epsilon < \hat \epsilon$.

Now, suppose for the sake of obtaining a contradiction 
that $\alpha'(0)$ does not belong to any of these hyperplanes 
$H_1, \ldots, H_s$. Therefore, for all $i \in \{1,\ldots, s\}$, we have 
$$
\inProd{\alpha(0)}{a_i} = 0, \qquad \inProd{\alpha'(0)}{a_i} \neq 0.
$$
Since $\alpha'(\cdot)$ is continuous, we can select $0 < \tilde \epsilon < \hat \epsilon$ such that  for all $i \in \{1,\ldots, s\}$ and $\epsilon \in (-\tilde \epsilon, \tilde \epsilon)$, we have
$$
\inProd{\alpha'(\epsilon)}{a_i} \neq 0.
$$
By the mean value theorem applied to $\inProd{\alpha(\cdot)}{a_i}$ on the interval $[0,\tilde{\epsilon}/2]$, we obtain that $\inProd{\alpha (\tilde \epsilon / 2)}{a_i} \neq 0$, for all $i \in {1,\ldots, s}$. Since $\tilde \epsilon /2 \in (-\hat \epsilon, \hat \epsilon)$, \eqref{eq:aux1} implies that 
$$\inProd{\alpha(\tilde \epsilon / 2)}{a_i} \neq 0,$$ for $i \in \{s+1,\ldots, r\}$ too. This shows that 
$\alpha(\tilde \epsilon / 2) \not \in X$, which is a contradiction.
\end{proof}
Before we prove the main lemma of this subsection, we need the following observation on finite dimensional vector spaces.
\begin{proposition}\label{prop:sub}
A finite dimensional real vector space $V$ is not a countable union of subspaces of dimension strictly smaller than $\dim V$.
\end{proposition}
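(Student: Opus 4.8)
\textbf{Proof plan for Proposition~\ref{prop:sub}.}
The plan is to argue by contradiction, assuming $V = \bigcup_{i=1}^\infty W_i$ with each $W_i$ a subspace of dimension strictly less than $\dim V$. The cleanest route is a measure-theoretic or Baire-category argument, but since we only want a clean self-contained proof, I would instead reduce to the classical fact that a vector space over an infinite field is not a finite union of proper subspaces, combined with an inductive dimension argument to handle the countable case.

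First I would dispose of the trivial cases: if $\dim V = 0$ the statement is vacuous, so assume $\dim V = d \geq 1$. Each $W_i$ is a proper subspace, hence is nowhere dense in $V$ (a proper subspace of $\Re^d$ has empty interior and is closed). Then $V = \bigcup_{i=1}^\infty W_i$ exhibits the complete metric space $\Re^d$ as a countable union of nowhere dense sets, contradicting the Baire category theorem. This is the quickest argument and I would present it as the main proof, since the excerpt is already comfortable invoking topological facts about $\Re^n$.

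As an alternative (in case one prefers to avoid Baire category), one can give an elementary proof: pick any $v \notin W_1$ and any $w$ with $\{v,w\}$ linearly independent (possible since $d \geq 2$; the case $d=1$ is immediate because then the only proper subspace is $\{0\}$). The affine line $\{v + tw \mid t \in \Re\}$ is uncountable, and each $W_i$ meets it in at most one point, since if $v+t_1w, v+t_2w \in W_i$ with $t_1 \neq t_2$ then $w \in W_i$ and hence $v \in W_i$, forcing $v \in W_i$ for the index containing a second point. So the line would be covered by countably many points — impossible. I expect no real obstacle here; the only point requiring a little care is making sure the chosen line is not accidentally contained in a single $W_i$, which is why one selects $v \notin \bigcup$ of finitely many relevant subspaces or simply uses the one-point-intersection observation directly. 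Either argument is short, so I would include the Baire-category version in the body and perhaps remark on the elementary alternative.
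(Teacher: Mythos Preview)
Your Baire-category argument is correct and complete: each proper subspace of $\Re^d$ is closed with empty interior, hence nowhere dense, and a countable union of such sets cannot exhaust the complete metric space $\Re^d$. The paper takes a different but equally short route, using Lebesgue measure instead of category: it intersects everything with the unit ball $B$, observes that each $W_i \cap B$ has $d$-dimensional measure zero (being contained in a lower-dimensional affine set), while $B$ itself has positive measure, so $B$ cannot be a countable union of the $W_i \cap B$. Both arguments exploit the same structural fact---a proper subspace is ``small'' in $\Re^d$---but measure and category are genuinely different notions of smallness, so the proofs are independent. Either fits naturally in the paper's analytic setting.

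One caution about your elementary alternative: as written it is circular. You pick $v \notin W_1$, but to conclude that the affine line $\{v+tw\}$ meets each $W_i$ in at most one point you need $w \notin W_i$ (or $v \notin W_i$) for \emph{every} $i$, not just $i=1$; otherwise some $W_i$ could contain the entire line. Your suggested fix of choosing $v$ outside ``finitely many relevant subspaces'' does not handle the countably infinite family, and finding a direction $w$ lying in no $W_i$ is exactly the statement you are trying to prove. A genuinely elementary argument exists (e.g.\ induct on $\dim V$ by intersecting with a hyperplane $H$ distinct from every hyperplane among the $W_i$, of which there are only countably many), but it requires more than the sketch you gave. Since your main Baire argument is already clean and self-contained, I would simply drop the alternative rather than repair it.
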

\begin{proof}
Suppose that  $V$ is a countable union $\bigcup W_i$ of subspaces of dimension smaller than $\dim V$. Take the unit ball $B \subseteq V$. Then, $B = \bigcup W_i \cap B$. However, this is not possible since each $W_i \cap B$ has measure zero, while $B$ has nonzero measure.
\end{proof}
We now have all the necessary pieces to prove the main lemma.
\begin{lemma}\label{prop:hplane}
Let $X \subseteq \Re^n$ be a  union of finitely many hyperplanes $H_i = \{a_i\}^\perp$ , $a_i \neq 0$, $i = 1,\ldots, r$.	
Let $M$ be an $(n-1)$ dimensional differentiable manifold that is connected, $C^1$-embedded in $\Re^n$ and contained in 
$X$.
Then, $M$ must be entirely contained in one of the $H_i$.
\end{lemma}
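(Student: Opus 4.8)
The plan is to argue by contradiction using the Gauss map. Suppose $M$ is not contained in any single $H_i$. Consider the relatively open subsets $M_i = \{x \in M \mid x \in H_i,\ x \notin H_j \text{ for } j \neq i\}$ — the points of $M$ lying on exactly one hyperplane. First I would show that $M \setminus \bigcup_i M_i$, the set of points lying on two or more hyperplanes, is ``small'': each point of $M$ has a chart $\varphi: U \to \Re^{n-1}$, and by Proposition~\ref{prop:curve} applied to curves of the form $t \mapsto \varphi^{-1}(\varphi(x) + t e_j)$, every tangent vector $v \in T_xM$ must lie in $X = \bigcup_i H_i$; since $T_xM$ is an $(n-1)$-dimensional subspace and $X$ is a finite union of hyperplanes, by Proposition~\ref{prop:sub} (applied inside $T_xM$) we must have $T_xM \subseteq H_{i}$ for \emph{some} $i$ with $x \in H_i$, and in fact $T_xM = H_i$ since both are $(n-1)$-dimensional. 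So at every point of $M$, the tangent space equals one of the $H_i$, hence the (local) Gauss map $N$ from Proposition~\ref{prop:gauss} takes values in the finite set $\{\pm a_1/\norm{a_1}, \ldots, \pm a_r/\norm{a_r}\}$.

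Next I would use continuity: since a $C^0$ map (the local Gauss map exists as a $C^{k-1} = C^0$ map over any chart domain $U$ by Proposition~\ref{prop:gauss}) into a finite (hence discrete) set is locally constant, $N$ is constant on each connected chart domain. Thus every point $x \in M$ has a neighborhood $U_x$ on which $T_yM \equiv H_{i(x)}$ for a fixed index $i(x)$. Define $g: M \to \{1,\ldots,r\}$ by sending $x$ to the common value of the hyperplane index on a neighborhood (well-defined because if $T_xM = H_i = H_j$ then $i = j$ as the $H_i$ are distinct). The map $g$ is locally constant, hence constant on the connected manifold $M$; say $g \equiv i_0$. Therefore $T_xM = H_{i_0}$ for all $x \in M$.

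Finally, to conclude that $M \subseteq H_{i_0}$, I would run the compactness/connectedness argument in the style of Proposition~4, Chapter~3 of do Carmo. Fix $x_0 \in M$ and let $y$ be any other point of $M$; since $M$ is connected it is path-connected, so take a continuous — and, after a standard smoothing, piecewise $C^1$ — curve $\alpha: [0,1] \to M$ with $\alpha(0) = x_0$, $\alpha(1) = y$. For each $t$, $\alpha'(t) \in T_{\alpha(t)}M = H_{i_0} = \{a_{i_0}\}^\perp$, so $\frac{d}{dt}\inProd{\alpha(t)}{a_{i_0}} = \inProd{\alpha'(t)}{a_{i_0}} = 0$; hence $\inProd{\alpha(t)}{a_{i_0}}$ is constant, giving $\inProd{y}{a_{i_0}} = \inProd{x_0}{a_{i_0}}$. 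Now note $x_0 \in X$ means $x_0 \in H_j$ for some $j$; but also $T_{x_0}M = H_{i_0}$, and a short argument (a curve through $x_0$ staying in $M \subseteq X$ near $x_0$, combined with $T_{x_0}M = H_{i_0}$) forces $x_0 \in H_{i_0}$, i.e. $\inProd{x_0}{a_{i_0}} = 0$; actually it is cleaner to observe directly that since $M_{i_0}$ (points of $M$ on $H_{i_0}$ only, or at least on $H_{i_0}$) is nonempty by the constancy of $g$ — pick any $x_0$ in it — we get $\inProd{x_0}{a_{i_0}} = 0$, whence $\inProd{y}{a_{i_0}} = 0$ for all $y \in M$, i.e. $M \subseteq H_{i_0}$.

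The main obstacle I expect is the transition from ``$T_xM$ is one of the $H_i$ at each point'' to ``the index is globally constant'': one must be careful that the local Gauss map is only guaranteed over a single chart domain and is a priori only continuous, and that distinct $H_i$ genuinely give distinct (non-proportional) normals $a_i$, so that the finite target set is discrete and local constancy applies. The other delicate point is justifying that a merely continuous connecting path can be replaced by one along which $\inProd{\cdot}{a_{i_0}}$ is verifiably constant — handled either by covering the path's image with chart neighborhoods on which the Gauss map is constant and patching, or by directly invoking path-connectedness with $C^1$ pieces as in \eqref{eq:txm}.
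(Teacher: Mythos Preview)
Your overall strategy coincides with the paper's: use Propositions~\ref{prop:curve} and~\ref{prop:sub} to force $T_xM$ to equal one of the $H_i$ at every point, observe that a local $C^0$ Gauss map then takes values in the finite set $\{\pm a_i/\norm{a_i}\}$ and is therefore locally constant, and use connectedness to globalize. Your packaging via the index function $g:M\to\{1,\ldots,r\}$ is a clean variant that replaces the paper's Heine--Borel patching of chart neighbourhoods along a path; once $g\equiv i_0$ is known, your piecewise-$C^1$ path argument for the constancy of $\inProd{\cdot}{a_{i_0}}$ is the same computation the paper carries out in coordinates.

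There is, however, a real gap at the very end. You need to show that the constant value of $\inProd{\cdot}{a_{i_0}}$ on $M$ is zero, and your ``cleaner'' alternative is incorrect: constancy of $g$ says $T_xM=H_{i_0}$ for all $x$, which has nothing to do with $x\in H_{i_0}$, so it does \emph{not} follow that $M\cap H_{i_0}\neq\emptyset$. Your first ``short argument'' can be made to work, but you must spell it out: if $x_0\notin H_{i_0}$, then by continuity a whole neighbourhood of $x_0$ in $M$ misses $H_{i_0}$ and hence lies in $\bigcup_{j\neq i_0}H_j$; re-running Proposition~\ref{prop:curve} on curves in this neighbourhood gives $T_{x_0}M\subseteq\bigcup_{j\neq i_0}H_j$, and Proposition~\ref{prop:sub} then forces $T_{x_0}M=H_j$ for some $j\neq i_0$, contradicting $T_{x_0}M=H_{i_0}$. (The paper handles exactly this issue by an induction on $r$: if the affine hyperplane $\{\inProd{z}{a_{i_0}}=\kappa_0\}$ containing $M$ has $\kappa_0\neq 0$, then $M\subseteq\bigcup_{j\neq i_0}H_j$ and one applies the inductive hypothesis.) With this step made explicit, your plan is complete; without it, the final conclusion $M\subseteq H_{i_0}$ is unjustified. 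Also, the sets $M_i$ you introduce at the start are never used and can be dropped.
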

\begin{proof}
We proceed by induction in $r$. 
The case $r = 1$ is clear, so suppose that $r > 1$.	
	
Consider a chart $\varphi:U \to \Re^{n-1}$ such that $U\subseteq M$ is connected and construct a $C^0$ (i.e., continuous) Gauss map $N$ in $U$, as in Proposition \ref{prop:gauss}.
Let $u \in U$ and let us examine the  tangent 
space $T_uM$. We have
$$
T_uM = \{\alpha'(0) \mid \alpha:(-\epsilon,\epsilon)\to M,~ \alpha(0) = u,~  \alpha \text{ is } C^1 \}.
$$	
By Proposition \ref{prop:curve},
$$
T_uM \subseteq X.
$$
Therefore, 
$$
T_uM = \bigcup _{i=1}^r H_i \cap T_uM.
$$
Each $H_i \cap T_uM$ is a subspace of $T_uM$ (an intersection of subspaces is also a subspace!). 
By Proposition~\ref{prop:sub}, $T_uM$ cannot be a union of subspaces of dimension less than $\dim T_uM = n-1$.
Therefore, there exists some index $j$ such that 
$H_j \cap T_uM = T_uM$.  Since both $T_uM$ and $H_j$ have dimension $n-1$, we conclude that $H_j = T_uM$.

In particular, the Gauss map $N$ satisfies $N(u) = a_j/\norm{a_j}$ or $N(u) = -a_j/\norm{a_j}$.
Therefore, for all $u \in U$, we have 
$$
N(u) \in \left\{\pm \frac{a_i}{\norm{a_i}} \mid i=1,\ldots, r\right\}.
$$
Since $U$ is connected and $N$ is continuous, we conclude that the Gauss map $N$ is constant. Denote this constant vector by $v$.

Let $\psi = \inProd{\varphi^{-1}(\cdot)}{v}$. Since 
$\varphi$ is a chart, given any $w\in \varphi(U)$, the differential $$d\varphi^{-1}_w:\Re^{n-1}\to T_{\varphi^{-1}(w)}M$$  is a linear bijection. Since $T_{\varphi^{-1}(w)}M$ is orthogonal to $v$, we conclude that $\psi' = 0$. Therefore $\psi$ must be constant and
there is $\kappa_0$ such that  
$\inProd{\varphi^{-1}(w)}{v} = \kappa_0$, for all $w \in \varphi(U)$. That is,  $\inProd{u}{v} = \kappa_0$, for all $u \in U$.

Recall that, given $x \in M$, we can always obtain a chart $\varphi:U \to M$ around $x$ such that $U$ is connected.
Therefore, the discussion so far shows that 
every $x \in M$ has a neighborhood $U$ such that $U$ is entirely contained in a hyperplane $$\{z \mid \inProd{z}{v_x}= \kappa_x\},$$ 
where $v_x$ has the same direction as one of the $a_1,\ldots, a_r$. Now, fix some $x \in M$ and let $y \in M$, $y \neq x$. Since $M$ is connected, there is a continuous path $\alpha:[0,1] \to M$ such that $\alpha(0) = x$ and 
$\alpha(1) = y$.

Similarly, for every $t \in [0,1]$, we can find a neighborhood $U_t\subseteq M$ of 
$\alpha(t)$ such that $U_t$ is contained in a hyperplane 
$\{z \mid \inProd{z}{v_t} = \kappa_{t} \}$ where $v_t$ is parallel to one of $a_1,\ldots,a_r$. In particular 
$$
[0,1] \subseteq \bigcup_{t \in [0,1]} \alpha^{-1}(U_t).
$$
Since the $U_t$ are open in $M$ and $\alpha$ is continuous, the  $\alpha^{-1}(U_t)$ form an open cover for the 
compact set $[0,1]$. 
Therefore, the Heine-Borel theorem implies that a finite number of the $\alpha^{-1}(U_t)$ are enough to cover $[0,1]$.
As a consequence, $\alpha$ itself is contained in finitely many neighborhoods $U_{t_1},\ldots U_{t_\ell}$. 
Now, we note the following:
\begin{itemize}
	\item If $U_{t_i} \cap U_{t_j} \neq \emptyset$ then $U_{t_i} \cap U_{t_j}$ is a nonempty open set in $M$ and therefore, an embedded submanifold of dimension $n-1$, see Proposition 5.1 in \cite{Lee2012}. Furthermore $U_{t_i} \cap U_{t_j}$ is contained in 
	the set $$
	H = \{z \in \Re^n \mid \inProd{z}{v_{t_i}} = \kappa _{t_i},~\inProd{z}{v_{t_j}} = \kappa _{t_j}  \}.
	$$
	Therefore, the smooth manifold $H$ must have at least dimension $n-1$. We conclude that ``$\inProd{z}{v_{t_i}} = \kappa _{t_i}$'' and 
	``$\inProd{z}{v_{t_j}} = \kappa _{t_j}$'' define the same hyperplane. So, $U_{t_i}$ and $U_{t_j}$ are in fact, contained in the same hyperplane.
	\item 
 $U_{t_1}$ must intersect 
	some of the $U_{t_2},\ldots, U_{t_\ell}$ because if it does not, then $\alpha^{-1}(U_{t_1})$ and $\alpha^{-1}(\cup _{i=2}^n U_{t_i})$ disconnect the connected set $[0,1]$.
	Changing the order of the sets if necessary, we may therefore assume that $U_{t_1}$ and $U_{t_2}$ intersect and, therefore,  lie in the same hyperplane. Similarly, the union $ U_{t_1} \cup U_{t_2}$ must intersect one of the remaining neighborhoods $U_{t_3},\ldots, U_{t_\ell}$, lest we disconnect the interval $[0,1]$. 
	By  induction, we conclude that all neighborhoods lie in the same hyperplane. 
\end{itemize}
In particular, $x$ and $y$ lie in the same hyperplane and, therefore, $M$ is entirely contained in some hyperplane whose normal direction has the same direction as one of the $a_1,\ldots, a_r$.

So far, we have shown that $M$ is entirely contained in a
hyperplane of the form $$\{z \in \Re^n \mid \inProd{z}{v} = \kappa _0 \}.$$ 
Without loss of generality, we may assume that $v$ has the same direction as $a_1$.
If $\kappa _0 = 0$, we are done. Otherwise,
since $v$ has the same direction as $a_1$, it follows that
$M$ does not intersect $H_1$ and  
$$
M \subseteq \bigcup _{i=2}^r H_i.
$$
By the induction hypothesis, $M$ must be contained in 
one of the $H_2,\ldots, H_r$.
\end{proof}

\section{Main results}\label{sec:main}

In this section, we show the main results on $p$-cones.
We begin by observing a basic fact on the differentiability of $p$-norms.

\begin{lemma}\label{lem:pnorm}
Let $n \geq 2$ and $p \in (1,\infty)$.
\begin{enumerate}
\item[(i)]
$\norm{\cdot}_p$ is $C^1$ on $\Re^n\setminus\{0\}$.
\item[(ii)]
If $p \in (1,2)$ then $\norm{\cdot}_p$ is $C^2$ on a neighborhood of $x$ if and only if $x_i\ne 0$ for all $i$.
\item[(iii)]
If $p \in [2,\infty)$ then $\norm{\cdot}_p$ is $C^2$ on $\Re^n\setminus\{0\}$.
\end{enumerate}
\end{lemma}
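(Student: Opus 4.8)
The plan is to reduce everything to the one-variable function $\phi_p(t)=\abs{t}^p$ and propagate its smoothness through the identity $\norm{x}_p=g(x)^{1/p}$, where $g(x)=\sum_{i=1}^n\phi_p(x_i)$, using that $g>0$ on $\Re^n\setminus\{0\}$ and that $t\mapsto t^{1/p}$ is $C^\infty$ on $(0,\infty)$.

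First I would record the elementary one-variable facts for $p\in(1,\infty)$: the function $\phi_p$ is $C^1$ on all of $\Re$, with $\phi_p'(t)=p\abs{t}^{p-1}\sign(t)$ (so $\phi_p'(0)=0$), and $\phi_p$ is $C^\infty$ on $\Re\setminus\{0\}$; moreover for $t\ne 0$ one has $\phi_p''(t)=p(p-1)\abs{t}^{p-2}$. From this: if $p\geq 2$, then $\phi_p''$ extends continuously to $t=0$ and, checking the difference quotient at $0$, $\phi_p\in C^2(\Re)$; if $1<p<2$, then $\phi_p'(s)/s=p\abs{s}^{p-2}\to\infty$ as $s\to 0$, so $\phi_p$ fails to be twice differentiable at $0$ — hence $\phi_p$ is $C^2$ on an open set if and only if that set omits $0$. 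All of this is routine calculus.

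Items (i) and (iii) are then immediate. For (i), $g$ is a finite sum of $C^1$ functions of single coordinates, hence $C^1$ on $\Re^n$; composing with the $C^\infty$ map $t\mapsto t^{1/p}$ on the open set $\{g>0\}\supseteq\Re^n\setminus\{0\}$ shows $\norm{\cdot}_p\in C^1(\Re^n\setminus\{0\})$. For (iii), when $p\geq 2$ the same argument with $C^2$ in place of $C^1$ works. For the ``if'' direction of (ii), if every $x_i\ne 0$, pick a product neighborhood of $x$ disjoint from all coordinate hyperplanes; on it each $\phi_p(x_i)$, and hence $g$, is $C^\infty$, so $\norm{\cdot}_p=g^{1/p}$ is $C^\infty$ there, in particular $C^2$.

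The only step with real content is the ``only if'' direction of (ii): assuming some $x_j=0$ (with $x\ne 0$), show $\norm{\cdot}_p$ is $C^2$ on no neighborhood of $x$. The idea is to restrict to the line $s\mapsto x+se_j$. Setting $c=\sum_{i\ne j}\abs{x_i}^p=\norm{x}_p^p>0$, this restriction is $\gamma(s)=(c+\abs{s}^p)^{1/p}$, which is $C^1$ near $0$ (by (i), since $x+se_j\ne 0$) with $\gamma'(s)=(c+\abs{s}^p)^{1/p-1}\abs{s}^{p-1}\sign(s)$ and $\gamma'(0)=0$; then $\gamma'(s)/s=(c+\abs{s}^p)^{1/p-1}\abs{s}^{p-2}\to+\infty$ as $s\to 0$ because $c>0$ and $p-2<0$, so $\gamma$ is not twice differentiable at $0$. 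Since the restriction of a $C^2$ function to an affine line is again $C^2$, no neighborhood of $x$ can be one on which $\norm{\cdot}_p$ is $C^2$. The main (and fairly mild) obstacle is just being careful about this last implication — that ``$C^2$ on a neighborhood of $x$'' genuinely forces twice-differentiability of the restriction at the point $x$ itself; the case $x=0$ is either excluded from the statement or dispatched at once, since there $\norm{\cdot}_p$ is a norm and not even $C^1$.
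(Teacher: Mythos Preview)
Your proof is correct and follows essentially the same approach as the paper: both establish (i) and (iii) by direct differentiation (you package it via the composition $g^{1/p}$ with $g=\sum_i\phi_p(x_i)$, the paper writes out the partial derivatives explicitly), and for the crucial ``only if'' in (ii) both restrict to the coordinate direction $e_j$ with $x_j=0$ and observe that the relevant second difference quotient behaves like a positive constant times $\abs{h}^{p-2}\to\infty$. The only difference is cosmetic; your chain-rule framing is slightly cleaner, while the paper records the explicit second-partial formulas, but the underlying computation is the same.
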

\begin{proof}
$(i)$ $\norm{\cdot}_p$ is $C^1$ on $\Re^n\setminus\{0\}$ because
$$\frac{\partial \norm{\cdot}_p}{\partial x_i}(x) = \norm{x}_p^{1-p}|x_i|^{p-1}\sign(x_i).$$

$(ii)$
If $x_i \ne 0$ for all $i$, it is easy to see that $\norm{\cdot}_p$ is $C^2$ on a neighborhood of $x$. 
For the converse, consider a point $x \ne 0$ with $x_i=0$ for some $i$. Then, $\frac{\partial \norm{\cdot}_p}{\partial x_i}(x)=0$ holds and so
\begin{align*}
\lim_{h\to 0}\frac{1}{h}\left(\frac{\partial \norm{\cdot}_p}{\partial x_i}(x+he_j)-\frac{\partial \norm{\cdot}_p}{\partial x_i}(x) \right)
&
=\lim_{h\to 0} h^{-1}\frac{\partial \norm{\cdot}_p}{\partial x_i}(x+he_j)
\\
&
= \lim_{h\to 0} h^{-1}\norm{x+he_i}_p^{1-p}|h|^{p-2}h
\\
&
= \lim_{h\to 0} \norm{x+he_j}_p^{1-p}|h|^{p-2}
\\
&
= \left\{\begin{array}{ll} +\infty& (p<2)\\  0 &(p>2)\end{array}\right.
.
\end{align*}
Hence, when $p \in (1,2)$, the derivative $\frac{\partial^2 \norm{\cdot}_p}{\partial x_j \partial x_i}(x)$ exists if and only if $x_i \ne 0$.

$(iii)$
For $p>2$ (the assertion in the case $p=2$ is clear),
$$\frac{\partial^2 \norm{\cdot}_p}{\partial x_j \partial x_i}(x) = (1-p)\norm{x}_p^{1-2p}|x_ix_j|^{p-1}\sign(x_i)\sign(x_j)$$ 
holds if $i \ne j$, otherwise we have
$$\frac{\partial^2 \norm{\cdot}_p}{\partial x_i^2}(x) = (1-p)\norm{x}_p^{1-2p}x_i^{2(p-1)}+(p-1)\norm{x}_p^{1-p}|x_i|^{p-2}.$$
\end{proof}
We now move on to the main result of this paper.
\begin{theorem}\label{thm:main}
Let $p,q \in [1,\infty]$, $p\leq q$, $n \geq 2$ and $(p,q,n)\neq (1,\infty,2)$.
Suppose that
$\SOC{p}{n+1}$ and $\SOC{q}{n+1}$ are isomorphic, that is,
$$A \SOC{p}{n+1} = \SOC{q}{n+1}$$ holds for some $A \in GL_{n+1}(\Re)$.
Then $p=q$ must hold. Moreover, if $p\ne 2$, then we have $A \in \Aut(\SOC{1}{n+1})$.
\end{theorem}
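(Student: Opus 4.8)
The plan is to dispose first of the cases where $p$ or $q$ equals $1$ or $\infty$, then to reduce all remaining cases to $p,q\in(1,2)$ by Euclidean conic duality, and finally to settle $p,q\in(1,2)$ with the machinery of Section~\ref{sec:diff}. If exactly one of $p,q$ lies in $\{1,\infty\}$, then one of the two cones is polyhedral and the other is not, so no isomorphism exists and the statement is vacuously true. If $p=q\in\{1,\infty\}$, then $A\in\Aut(\SOC{p}{n+1})$ and Proposition~\ref{prop:GT}, together with $\Aut(\SOC{\infty}{n+1})=\Aut(\SOC{1}{n+1})$, yields $A\in\Aut(\SOC{1}{n+1})$. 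If $p=1<\infty=q$, then $(p,q,n)\ne(1,\infty,2)$ forces $n\ge3$, whence the extreme-ray counts $2n\ne2^n$ preclude an isomorphism. So from now on I may assume $p,q\in(1,\infty)$.

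Next I would invoke Proposition~\ref{prop:C^k} with $U=\Re^n\setminus\{0\}$, $f=\norm{\cdot}_p$, $g=\norm{\cdot}_q$: the induced $C^1$-diffeomorphism $\mapB$ satisfies ``$\norm{\cdot}_p$ is $C^2$ near $x$ iff $\norm{\cdot}_q$ is $C^2$ near $\mapB(x)$''. Since $\norm{\cdot}_r$ is $C^2$ throughout $\Re^n\setminus\{0\}$ when $r\in[2,\infty)$ but fails to be $C^2$ near $e_1$ when $r\in(1,2)$ (Lemma~\ref{lem:pnorm}, using $n\ge2$), this excludes the combination $p\in(1,2)$, $q\in[2,\infty)$, and it also shows $\SOC{2}{n+1}\not\cong\SOC{r}{n+1}$ for every $r\in(1,2)$. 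Combining this with the identities $(AK)^{*}=A^{-T}K^{*}$ and $(\SOC{r}{n+1})^{*}=\SOC{r'}{n+1}$, $\tfrac1r+\tfrac1{r'}=1$ (so $(\SOC{2}{n+1})^{*}=\SOC{2}{n+1}$), I argue: if $p=2$ and $q>2$, dualizing $A\SOC{2}{n+1}=\SOC{q}{n+1}$ gives $A^{-T}\SOC{2}{n+1}=\SOC{q'}{n+1}$ with $q'\in(1,2)$, a contradiction, so $p=2$ forces $q=2$ and the theorem holds trivially; if $p>2$ then $2<p\le q<\infty$ and dualizing gives $A^{-T}\SOC{p'}{n+1}=\SOC{q'}{n+1}$ with $1<q'\le p'<2$ (here $p\le q\iff q'\le p'$), so it suffices to prove the theorem for two exponents in $(1,2)$, after which $p'=q'$ (hence $p=q$) and, since $q'\ne2$, $A^{-T}\in\Aut(\SOC{1}{n+1})$, whence $A\in\Aut(\SOC{1}{n+1})$ because $\Aut(\SOC{1}{n+1})^{T}=\Aut(\SOC{\infty}{n+1})=\Aut(\SOC{1}{n+1})$ is inverse-transpose closed. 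Thus everything comes down to $p,q\in(1,2)$.

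For $p,q\in(1,2)$, put $H_i=\{(t,x)\in\Re^{n+1}\mid x_i=0\}$ and $M_r=\graph(\norm{\cdot}_r|_{\Re^n\setminus\{0\}})$, the punctured boundary of $\SOC{r}{n+1}$. By Lemma~\ref{lem:pnorm}(ii), the locus $S_r$ of points of $M_r$ where $\norm{\cdot}_r$ is not $C^2$ equals $\bigcup_{i=1}^n(M_r\cap H_i)$, and Proposition~\ref{prop:C^k}(ii) forces $A$ to map $S_p$ bijectively onto $S_q$. When $n=2$, $S_r$ does not depend on $r$ (since $M_r\cap H_1=\{(|x_2|,0,x_2)\mid x_2\ne0\}$ and likewise for $H_2$): it is the union of the four open rays $\Re_{++}(1,\pm e_1)$, $\Re_{++}(1,\pm e_2)$, whose closures are the extreme rays of $\SOC{1}{3}$; being a linear homeomorphism fixing $0$, $A$ permutes these four rays, hence fixes $\SOC{1}{3}=\conicHull\{\text{extreme rays}\}$, so $A\in\Aut(\SOC{1}{3})$. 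When $n\ge3$, I would fix $j$ and examine $N=M_q\cap H_j$: by Proposition~\ref{prop:graph}(i) it is a connected $(n-1)$-dimensional $C^1$-embedded submanifold of the hyperplane $H_j$, and $N\subseteq S_q=\bigcup_i A(M_p\cap H_i)\subseteq\bigcup_i A(H_i)$, so $N$ lies in the finite union of the subspaces $A(H_i)\cap H_j$ of $H_j$. If no $A(H_i)$ equalled $H_j$, these would all be hyperplanes of $H_j$, and Lemma~\ref{prop:hplane}, applied inside $H_j\cong\Re^n$, would confine $N$ to a single $(n-1)$-dimensional subspace; but the affine hull of $N$ is all of $H_j$ (it contains $(1,\pm e_l)$ for $l\ne j$, hence $(1,0)$ and every $(0,e_l)$, and also some $(2^{1/q},e_l+e_m)$ with $2^{1/q}\ne1$), a contradiction. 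Hence each $H_j$ equals some $A(H_i)$, and since $A$ is bijective, $A$ permutes $\{H_1,\dots,H_n\}$.

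Finally, in both cases $A$ permutes $\{H_1,\dots,H_n\}$ (for $n=2$ this follows from $A\in\Aut(\SOC{1}{3})$ and Proposition~\ref{prop:GT}); writing $A(H_i)=H_{\sigma(i)}$ and comparing kernels of the coordinate functionals $(t,x)\mapsto x_i$ shows $A=\left(\begin{smallmatrix}s & v^{T}\\ 0 & R\end{smallmatrix}\right)$ with $s\ne0$ and $R$ an $n\times n$ monomial matrix. Imposing that $A$ carries the boundary $\{t=\norm{x}_p\}$ onto $\{t=\norm{x}_q\}$ gives $s\norm{x}_p+v^{T}x=\norm{Rx}_q$ for all $x\in\Re^n$, and testing $x=\pm e_l$ yields $v=0$, $s>0$, and $\norm{Re_l}_q=s$ for every $l$, so $R=sP$ with $P$ a generalized permutation matrix. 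Then $A=s\left(\begin{smallmatrix}1 & 0\\ 0 & P\end{smallmatrix}\right)$ has the form \eqref{eq:format}, so $A\in\Aut(\SOC{1}{n+1})$ by Proposition~\ref{prop:GT}, while the boundary identity collapses to $\norm{x}_p=\norm{x}_q$ for all $x$, giving $2^{1/p}=2^{1/q}$ (take $x=e_1+e_2$, using $n\ge2$) and hence $p=q$. I expect the main obstacle to be the $n\ge3$ part of the third paragraph: correctly identifying the non-$C^2$ locus as $\bigcup_i(M_r\cap H_i)$, checking that each slice $M_q\cap H_j$ is genuinely a connected $C^1$-embedded hypersurface of $H_j$ with full ($n$-dimensional) affine hull, and applying Lemma~\ref{prop:hplane} relative to the ambient hyperplane $H_j$ rather than $\Re^{n+1}$; keeping the duality bookkeeping of the second paragraph ($p\le q\iff q'\le p'$) consistent is a secondary point of care.
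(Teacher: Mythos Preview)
Your proof is correct and tracks the paper through the polyhedral cases and the duality reduction to $p,q\in(1,2)$. The genuine divergence is in how you finish the case $p,q\in(1,2)$ for $n\ge3$. The paper works with the projected $C^1$-diffeomorphism $\mapB$ on $\Re^n\setminus\{0\}$, applies Lemma~\ref{prop:hplane} in $\Re^n$ to each image $\mapB(X_i)$ (where $X_i=\{x\ne0:x_i=0\}$) to obtain a permutation $\tau$ with $\mapB(X_i)=X_{\tau(i)}$, lifts this to $A(V_i)=V_{\tau(i)}$, and then proceeds by \emph{induction on $n$}: restricting $A$ to each $V_i\cong\Re^n$ gives an isomorphism $\SOC{p}{n}\to\SOC{q}{n}$, and the inductive hypothesis forces $A$ to permute the extreme rays of $\SOC{1}{n+1}$, whence $A\in\Aut(\SOC{1}{n+1})$ via $\conicHull$. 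You instead apply Lemma~\ref{prop:hplane} inside each $H_j$ to the slice $M_q\cap H_j$, use a span argument to conclude $A$ permutes $\{H_1,\dots,H_n\}$ directly, and then read off the monomial block structure of $A$ from kernel considerations---no induction needed. Your route is shorter and more self-contained; the paper's inductive path has the minor advantage of isolating the clean intermediate statement ``$A$ permutes the extreme rays of $\SOC{1}{n+1}$''. One small simplification of your span step: since each $A(H_i)\cap H_j$ is a \emph{linear} subspace, it suffices to note that $\spanVec N$ contains $(1,e_l)$ and $(1,-e_l)$ for all $l\ne j$, hence $(1,0)$ and every $(0,e_l)$, so $\spanVec N=H_j$; the extra point $(2^{1/q},e_l+e_m)$ is unnecessary.
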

\begin{proof}
The proof consists of three parts I, II, and III.

\fbox{I}
First we consider the case $p \in \{1,\infty\}$ corresponding to the case when $\SOC{p}{n+1}$ is polyhedral.
Since $A$ preserves polyhedrality,  $q$ must be $1$ or $\infty$ too.
Note that $\SOC{1}{n+1}$ and $\SOC{\infty}{n+1}$ cannnot be isomorphic if $n\geq 3$ because they have different numbers of extreme rays, see Section \ref{sec:pcon}.
Therefore, $p=q=1$ or $p=q=\infty$ must hold.
Since $\Aut(\SOC{\infty}{n+1})=\Aut(\SOC{1}{n+1})$ holds (Proposition~\ref{prop:GT}), the assertion is verified in the case $p\in\{1,\infty\}$.

\fbox{II}~
Now let $p,q \in (1,\infty)$.
Then the set
$$M_p := \{(t,x)\in\Re\times\Re^n\setminus\{0\} \mid t = \norm{x}_p\}$$
becomes a $C^1$-embedded submanifold of $\Re^{n+1}$ by Lemma~\ref{lem:pnorm} (i) and Proposition~\ref{prop:graph} (i).
Note that $A\SOC{p}{n+1}=\SOC{q}{n+1}$ implies $AM_p=M_q$ since $A$ maps the boundary of $\SOC{p}{n+1}$ onto the boundary of $\SOC{q}{n+1}$.

It suffices to consider the case $p,q \in (1,2)$ by the following observation.
\begin{itemize}
\item[(a)]
The case $1<p<2\leq q<\infty$ does not happen in view of Proposition~\ref{prop:C^k} and Lemma \ref{lem:pnorm}. In fact, since $\norm{\cdot}_q$ is $C^2$ on $\Re^n\setminus\{0\}$ and $A^{-1}M_q = M_p$ holds, Proposition~\ref{prop:C^k} implies that $\norm{\cdot}_p$ is $C^2$ on $\Re^n\setminus\{0\}$ but this is a contradiction.
\item[(b)]
If $2\leq p\leq q<\infty$ holds, then taking the dual of the relation $A\SOC{p}{n+1}=\SOC{q}{n+1}$ with respect to the Euclidean inner product, it follows that
$$A^{-T}\SOC{p^*}{n+1} = \SOC{q^*}{n+1}$$
where $p^*$ and $q^* \in (1,2]$ are the conjugates of $p$ and $q$, respectively.
Either $p^*=q^*=2$ or $p^*,q^* \in (1,2)$ must hold by (a).
If $p^*=q^*=2$, then we are done since this implies that $p = q = 2$.
Now, suppose that $p^*,q^* \in (1,2)$. 
If we prove that $p^*=q^*$ and $A^{-T} \in \Aut(\SOC{1}{n+1})$, then we conclude that $p=q$ and $A \in \Aut(\SOC{1}{n+1})^{-T}$.
However, by  Proposition~\ref{prop:GT}, $\Aut(\SOC{1}{n+1})^{-T} = \Aut(\SOC{1}{n+1})$ (Note that, if $P$ is a generalized permutation matrix, then so is $P^{-T}$).
\end{itemize}
From cases $(a),(b)$ we conclude that it is enough to consider the case $p,q \in (1,2)$, which we will do next.

\fbox{III}~
Let $p,q \in (1,2)$.
We show by induction on $n$ that every $A \in GL_{n+1}(\Re)$ with $A\SOC{p}{n+1}=\SOC{q}{n+1}$ is a bijection on the set $$E=\bigcup_{i=1}^n\bigcup_{\sigma \in \{-1,1\}}\HLS(1,\sigma e_i^n),$$ where $e_i^n$ is the $i$-th standard unit vector in $\Re^n$.
First, let us check that this claim implies $A \in \Aut(\SOC{1}{n+1})$ and $p=q$.
Taking the conical hull of the relation $AE=E$, we conclude that
$$A\SOC{1}{n+1} = A(\conicHull(E)) = \conicHull (AE) = \conicHull(E) = \SOC{1}{n+1},$$
where the relation $\conicHull(E)=\SOC{1}{n+1}$ holds because
a pointed closed convex cone is the conical hull of its extreme rays (see Theorem 18.5 in \cite{rockafellar}) and $E$ is precisely the union of all the extreme rays of $\SOC{1}{n+1}$ with the origin removed, see Section \ref{sec:pcon}. Therefore, we have
$$A \in \Aut(\SOC{1}{n+1}) \subseteq \Aut(\SOC{p}{n+1}),$$
where the last inclusion follows by Proposition~\ref{prop:GT} because $\norm{Px}_p=\norm{x}_p$ for any generalized permutation matrix $P$.
Then $\SOC{p}{n+1}=A\SOC{p}{n+1}=\SOC{q}{n+1}$ and so $p=q$ must hold.

Now, let us show the claim that $A$ is a bijection on $E$.
Consider the map $\xi_p:\Re^n\setminus\{0\} \to M_p$ defined by $\xi_p(x)=(\norm{x}_p,x)$
whose inverse $\xi^{-1}_p:M_p\to \Re^n\setminus\{0\}$ is the projection $\xi^{-1}_p(t,x)=x$.
By Proposition~\ref{prop:C^k},
the map $B:\Re^n\setminus\{0\}\to \Re^n\setminus\{0\}$ defined by
$$B(x)=\xi_q^{-1}\circ A|_{M_p}\circ \xi_p(x)$$
is a $C^1$ diffeomorphism.
Moreover, $\norm{\cdot}_p$ is $C^2$ on a neighborhood of $x$ if and only if $\norm{\cdot}_q$ is $C^2$ on a neighborhood of $B(x)$.
Since $p,q\in (1,2)$, each of the functions $\norm{\cdot}_p$ and $\norm{\cdot}_q$ is $C^2$ on a neighborhood of $x$ if and only if $x_i\ne 0$ for all $i$ (Lemma~\ref{lem:pnorm}). This implies that the set
$$
X=\{x \in \Re^n\setminus\{0\}~|~ x_i = 0\text{ for some } i\}
$$
satisfies
$$
B(X)=X
$$
because $x$ belongs to $X$ if and only if $\norm{\cdot}_p$ and $\norm{\cdot}_q$ are never $C^2$ on any neighborhood of $x$.

\fbox{III.a}
Consider the case $n=2$. Then the set $X$ can be written as
\begin{align*}
X
&= \{x \in \Re^2\setminus\{0\} \mid x_1=0 \text{ or } x_2=0\}
\\
&=\HLS(0,1) \cup \HLS(0,-1) \cup \HLS(1,0) \cup \HLS(-1,0)
\\&= \bigcup_{i=1}^2\bigcup_{\sigma \in \{-1,1\}}\HLS(\sigma e_i^2).
\end{align*}
Then $\xi_p(X)$ and $\xi_q(X)$ coincide with $E$:
$$\xi_p(X)=\xi_q(X)=\bigcup_{i=1}^2 \bigcup_{\sigma \in \{-1,1\}}\HLS(1,\sigma e_i^2)=E.$$
Moreover, $A$ is bijective on $E$  because
$$A(\xi_p(X)) = \xi_q\circ \xi_q^{-1}\circ A|_{M_p}\circ \xi_p(X) = \xi_q\circ B(X)=\xi_q(X).$$
Thus, the claim $AE=E$  holds in the case $n=2$.

\fbox{III.b}
Now let $n\geq 3$ and suppose that the claim is valid for $n-1$.
Denote
$$
X_i := \{x \in \Re^n\setminus\{0\} \mid x_i = 0\},\quad
M_p^i := \xi_p(X_i)= \{(t,x)\in \Re\times \Re^n\setminus\{0\} : t = \norm{x}_p,~x_i=0\}.
$$
With that, we have
$$
X = \bigcup _{i=1}^n X_i.
$$
We show that for any $i\in\{1,\ldots,n\}$ there exists $j\in\{1,\ldots,n\}$ such that
$$
B(X_i)=X_j.
$$
For any $i$,
the set $X_i$ is a connected $(n-1)$ dimensional $C^1$-embedded submanifold of $\Re^n$ contained in $X$.
Since $B:\Re^n\setminus\{0\}\to\Re^n\setminus\{0\}$ is a $C^1$ diffeomorphism satisfying $B(X)=X$,
the set $B(X_i)$ is also a connected $(n-1)$ dimensional $C^1$-embedded submanifold of $\Re^n$ contained in $X$.
Then, since $X\cup\{0\}$ is the union of the hyperplanes $X_i\cup\{0\}$, $i=1,\ldots,n$,
it follows from Proposition~\ref{prop:hplane} that $B(X_i)$ is entirely contained in some hyperplane $X_j\cup\{0\}$.
Then we have $$B(X_i)\subseteq X_j.$$ By the same argument, the set $B^{-1}(X_j)$ is contained in some hyperplane $X_k\cup\{0\}$,
that is, $B^{-1}(X_j)\subseteq X_k$ holds.
This shows that
$$
X_i = B^{-1}(B(X_i)) \subseteq B^{-1}(X_j) \subseteq X_k.
$$
Since $X_i$ cannnot be a subset of $X_k$ if $i\ne k$, it follows that $i=k$.
Then, we obtain $X_i=B^{-1}(X_j)$, i.e., $B(X_i)=X_j$.

Since $B$ is a bijection, the above argument shows that
there exists a permutation $\tau$ on $\{1,\ldots,n\}$ such that
$$B(X_i)=X_{\tau(i)}.$$
Then we have
\[
A(M_p^i) = \xi_q \circ \xi_q^{-1} \circ A|_{M_p} \circ \xi_p (X_i) = \xi_q\circ B(X_i) = \xi_q(X_{\tau(i)}) = M_q^{\tau(i)}.
\]
Taking the linear span both sides, we also have
$$
A(V_i)=V_{\tau(i)} ~\text{ where }~ V_i:=\{(t,x)\in\Re\times\Re^n\mid x_i = 0\}.
$$
Now we apply the induction hypothesis to the isomorphism $A|_{V_i}$ as follows.
Define the isomorphism $\varphi_i:V_i \to \Re^{n}$ by
\[
\varphi_i(t,x_1,\ldots,x_{i-1},0,x_{i+1},\ldots,x_n) = (t,x_1,\ldots,x_{i-1},x_{i+1},\ldots,x_n)
\]
and consider the isomorphism $A_i:=\varphi_{\tau(i)}\circ A|_{V_i} \circ \varphi_i^{-1}:\Re^n\to \Re^n$. By the above argument, we see that $A_i(\SOC{p}{n}) = \SOC{q}{n}$:
\[
A_i(\SOC{p}{n}) = \varphi_{\tau(i)}\circ A|_{V_i} \circ \varphi_i^{-1}(\SOC{p}{n}) = \varphi_{\tau(i)}\circ A(\conicHull M_p^{i})
= \varphi_{\tau(i)}(\conicHull M_q^{\tau(i)}) = \SOC{q}{n}.
\]
So the induction hypothesis implies that
$A_i$ is bijective on
$$\bigcup_{j=1}^{n-1}\bigcup_{\sigma \in \{-1,1\}}\HLS(1,\sigma e_j^{n-1}).$$
Therefore, $A|_{V_i}=\varphi_{\tau(i)}^{-1}\circ A_i^{-1} \circ \varphi_i$ is a bijection from
$$
\bigcup_{j \in \{1,\ldots,n\}\setminus\{i\}}~\bigcup_{\sigma \in \{-1,1\}}\HLS(1,\sigma e_j^n)
$$
onto
$$
\bigcup_{j \in \{1,\ldots,n\}\setminus\{\tau(i)\}} ~\bigcup_{\sigma \in \{-1,1\}} \HLS(1,\sigma e_j^n).
$$
Combining this result for each $i=1,\ldots,n$,
it turns out that
$A$ is bijective on $$E=\bigcup_{i=1}^n\bigcup_{\sigma \in \{-1,1\}}\HLS(1,\sigma e_i^n).$$
\end{proof}

Combining the latter assertion of Theorem~\ref{thm:main} and Proposition~\ref{prop:GT}, we obtain the description of the automorphism group of the $p$-cones.

\begin{corollary}\label{cor:aut}
For $p \in [1,\infty]$, $p \ne 2$ and $n\geq 2$, we have $\Aut(\SOC{p}{n+1}) = \Aut(\SOC{1}{n+1})$.
In particular, any $A \in \Aut(\SOC{p}{n+1})$ can be written as
$$
A=
\alpha\begin{pmatrix}
1 & 0\\
0 & P
\end{pmatrix},
$$
where $\alpha > 0$ and $P$ is an $n\times n$ generalized permutation matrix.
\end{corollary}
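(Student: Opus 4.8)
The plan is to reduce everything to Theorem~\ref{thm:main} and Proposition~\ref{prop:GT}, treating the cases $p\in\{1,\infty\}$ and $p\in(1,\infty)\setminus\{2\}$ separately. For $p\in\{1,\infty\}$, the equality $\Aut(\SOC{p}{n+1}) = \Aut(\SOC{1}{n+1})$ is exactly the content of Proposition~\ref{prop:GT} (recall that proposition asserts $\Aut(\SOC{\infty}{n+1})=\Aut(\SOC{1}{n+1})$), so nothing further is needed. Thus the substance lies in the case $p\in(1,\infty)$, $p\ne 2$, which I would handle by proving the two inclusions $\Aut(\SOC{p}{n+1})\subseteq\Aut(\SOC{1}{n+1})$ and $\Aut(\SOC{1}{n+1})\subseteq\Aut(\SOC{p}{n+1})$.

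For the forward inclusion, let $A\in\Aut(\SOC{p}{n+1})$, i.e.\ $A\SOC{p}{n+1}=\SOC{p}{n+1}$. Apply Theorem~\ref{thm:main} with $q=p$: since $p=q\in(1,\infty)$ and $(p,q,n)=(p,p,n)\ne(1,\infty,2)$, the hypotheses of the theorem are met, and its second conclusion (valid because $p\ne 2$) gives $A\in\Aut(\SOC{1}{n+1})$. This already shows $\Aut(\SOC{p}{n+1})\subseteq\Aut(\SOC{1}{n+1})$. For the reverse inclusion, take $A\in\Aut(\SOC{1}{n+1})$; by Proposition~\ref{prop:GT} we may write $A=\alpha\left(\begin{smallmatrix}1 & 0\\ 0 & P\end{smallmatrix}\right)$ with $\alpha>0$ and $P$ a generalized permutation matrix. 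Since $\norm{Px}_p=\norm{x}_p$ for every generalized permutation matrix $P$ (permuting coordinates and flipping signs leaves each $|x_i|$ unchanged, hence leaves $\norm{x}_p$ unchanged), a direct check shows $(t,x)\in\SOC{p}{n+1}\iff \alpha t\ge\alpha\norm{x}_p=\alpha\norm{Px}_p\iff A(t,x)\in\SOC{p}{n+1}$, so $A\SOC{p}{n+1}=\SOC{p}{n+1}$, i.e.\ $A\in\Aut(\SOC{p}{n+1})$. Combining the two inclusions yields $\Aut(\SOC{p}{n+1})=\Aut(\SOC{1}{n+1})$ for all $p\in[1,\infty]$, $p\ne 2$, $n\ge 2$.

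The ``in particular'' clause then follows immediately: by the equality just established together with Proposition~\ref{prop:GT}, every $A\in\Aut(\SOC{p}{n+1})$ lies in $\Aut(\SOC{1}{n+1})$ and hence has the stated block form $A=\alpha\left(\begin{smallmatrix}1 & 0\\ 0 & P\end{smallmatrix}\right)$ with $\alpha>0$ and $P$ an $n\times n$ generalized permutation matrix.

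Honestly there is no real obstacle here, since all the heavy lifting has already been done in Theorem~\ref{thm:main}; the only point requiring a line of justification is the elementary observation that generalized permutation matrices are isometries of $\norm{\cdot}_p$, which is what makes the reverse inclusion work. One small thing to be careful about is the endpoint cases: when $p\in\{1,\infty\}$ one must not invoke Theorem~\ref{thm:main} (whose third part concerns $p\in(1,\infty)$) but instead cite Proposition~\ref{prop:GT} directly, and I would state this split explicitly at the start of the proof.
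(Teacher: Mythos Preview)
Your proof is correct and follows essentially the same approach as the paper, which simply states that the corollary follows by combining Theorem~\ref{thm:main} (giving the inclusion $\Aut(\SOC{p}{n+1})\subseteq\Aut(\SOC{1}{n+1})$) with Proposition~\ref{prop:GT} (giving the explicit description and, via the invariance $\norm{Px}_p=\norm{x}_p$, the reverse inclusion). One very minor remark: your caution about not invoking Theorem~\ref{thm:main} for $p\in\{1,\infty\}$ is unnecessary, since the theorem's hypotheses allow $p,q\in[1,\infty]$ and its Part~I handles exactly those endpoint cases; but citing Proposition~\ref{prop:GT} directly there is of course also fine.
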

We can also recover our previous result  on the non-homogeneity of $p$-cones with $p \ne 2$.
In contrast to \cite{IL17}, here we do not require the theory of $T$-algebras. 
\begin{corollary}\label{cor:hom}
For $p \in [1,\infty]$, $p \ne 2$ and $n\geq 2$, the $p$-cone $\SOC{p}{n+1}$ is not homogeneous.
\end{corollary}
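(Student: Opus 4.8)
The plan is to read off non-homogeneity directly from the structural description of the automorphism group provided by Corollary~\ref{cor:aut}. That corollary (valid precisely because $n\geq 2$) says every $A\in\Aut(\SOC{p}{n+1})$ has the form
$A=\alpha\begin{pmatrix}1 & 0\\ 0 & P\end{pmatrix}$
with $\alpha>0$ and $P$ an $n\times n$ generalized permutation matrix. The decisive remark is that any such $A$ \emph{fixes the ``main axis''} $\HLS(1,0,\ldots,0)$: since $P$ sends the zero vector of $\Re^n$ to itself, we get $A(1,0,\ldots,0)^T=(\alpha,0,\ldots,0)^T\in\HLS(1,0,\ldots,0)$. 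Consequently, no automorphism can move a point lying on the axis to a point off the axis.

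With this in hand I would simply exhibit two interior points of $\SOC{p}{n+1}$ that no automorphism can relate. Take $x=(1,0,\ldots,0)$, which lies in $\interior\SOC{p}{n+1}$ because $1>0=\norm{0}_p$, and take $y=(2,1,0,\ldots,0)\in\Re^{n+1}$, which lies in $\interior\SOC{p}{n+1}$ because $2>1=\norm{(1,0,\ldots,0)}_p$. For any $A\in\Aut(\SOC{p}{n+1})$, the previous paragraph shows that all coordinates of $Ax$ except possibly the first vanish, whereas the second coordinate of $y$ equals $1\ne 0$. Hence $Ax\ne y$ for every $A\in\Aut(\SOC{p}{n+1})$, so $\Aut(\SOC{p}{n+1})$ does not act transitively on $\interior\SOC{p}{n+1}$, i.e., $\SOC{p}{n+1}$ is not homogeneous.

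There is really no obstacle to overcome at this stage: the entire difficulty has been front-loaded into Theorem~\ref{thm:main} and Corollary~\ref{cor:aut}, and non-homogeneity is a one-line consequence once the automorphism group is known to fix the main axis. The only minor care needed is to verify that the two chosen witnesses indeed lie in the interior, which reduces to the strict inequalities $1>\norm{0}_p$ and $2>\norm{(1,0,\ldots,0)}_p$; and to note that the hypothesis $n\geq 2$ is used exactly where Corollary~\ref{cor:aut} is invoked (for $n=1$ the cone $\SOC{p}{2}$ is, in fact, homogeneous, so this hypothesis is essential).
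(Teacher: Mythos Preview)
Your proof is correct and follows essentially the same approach as the paper: invoke Corollary~\ref{cor:aut} to see that every automorphism fixes the main axis $\HLS(1,0,\ldots,0)$, and conclude that no automorphism can send the interior point $(1,0,\ldots,0)$ to any interior point off this axis. The paper phrases this last step abstractly (no automorphism maps $(1,0,\ldots,0)$ outside $\{(\beta,0,\ldots,0)\mid\beta>0\}$), while you exhibit an explicit witness $y=(2,1,0,\ldots,0)$, but the argument is the same.
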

\begin{proof}
By Corollary \ref{cor:aut}, for any $A \in \Aut(\SOC{p}{n+1})=\Aut(\SOC{1}{n+1})$, we have 
that the vector 
$(1,0,\ldots,0)$ is an eigenvector of
$A$.
So, there is no automorphism of $\SOC{p}{n+1}$ that maps $(1,0,\ldots,0)$ to an 
interior point of $\SOC{p}{n+1}$ that 
does not belong to 
$$
\{(\beta,0,\ldots,0 ) \mid \beta > 0\}.
$$
Hence, $\SOC{p}{n+1}$ cannot be homogeneous.
\end{proof}

Now the non-self-duality of $p$-cones $\SOC{p}{n+1}$ for $p\ne 2$ and $n\geq 2$ is an immediate consequence of Theorem~\ref{thm:main} in view of Proposition~\ref{prop:selfdual}, while we need an extra argument for the case $(p,q,n)=(1,\infty,2)$.

\begin{corollary}\label{cor:selfdual}
For $p \in [1,\infty]$, $p \ne 2$ and $n\geq 2$, the $p$-cone $\SOC{p}{n+1}$ is not self-dual under any inner product.
\end{corollary}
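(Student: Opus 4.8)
The plan is to derive this from Theorem~\ref{thm:main} together with Proposition~\ref{prop:selfdual}, the only genuinely new work being the exceptional triple $(p,q,n)=(1,\infty,2)$, which I would settle by hand using Proposition~\ref{prop:GT}.

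First I would recall that, by Proposition~\ref{prop:selfdual}, $\SOC{p}{n+1}$ admits a self-dualizing inner product if and only if some symmetric positive definite $A$ satisfies $A\SOC{p}{n+1}=(\SOC{p}{n+1})^{*}=\SOC{q}{n+1}$, where $q$ is the conjugate exponent of $p$; since $p\ne 2$ we have $q\ne p$. Because conjugacy is symmetric and $A^{-1}$ is again symmetric positive definite with $A^{-1}\SOC{q}{n+1}=\SOC{p}{n+1}$, I may relabel so that $p<q$, hence $p\in[1,2)$ and $q\in(2,\infty]$. If $(p,q,n)\ne(1,\infty,2)$, then $A$ is in particular a linear isomorphism between $\SOC{p}{n+1}$ and $\SOC{q}{n+1}$, so Theorem~\ref{thm:main} forces $p=q$, contradicting $p<q$. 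Thus everything reduces to showing that $\SOC{1}{3}$ is not self-dual.

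For that last case I would argue directly. Assume $A$ is symmetric positive definite with $A\SOC{1}{3}=\SOC{\infty}{3}$. Since the matrix $A_{0}$ of \eqref{eq:k1kinf} also sends $\SOC{1}{3}$ to $\SOC{\infty}{3}$, the product $G:=A_{0}^{-1}A$ is an automorphism of $\SOC{1}{3}$, so Proposition~\ref{prop:GT} gives $G=\alpha\begin{pmatrix}1&0\\0&P\end{pmatrix}$ with $\alpha>0$ and $P=(p_{ij})$ a $2\times 2$ generalized permutation matrix. A short computation shows that the first row and column of $A=A_0G$ are $(\alpha,0,0)$ up to the corner, while its trailing $2\times 2$ block is $\alpha\begin{pmatrix}p_{11}-p_{21}&p_{12}-p_{22}\\p_{11}+p_{21}&p_{12}+p_{22}\end{pmatrix}$, so symmetry of $A$ forces $p_{12}-p_{22}=p_{11}+p_{21}$. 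Inspecting the two possible shapes of $P$: if $P$ is diagonal this reads $p_{11}+p_{22}=0$, and if $P$ is anti-diagonal it reads $p_{12}=p_{21}$; in either case $\det P=-1$. Since $\det A_{0}=2$, this gives $\det A=\det A_0\det G=2\alpha^{3}\det P=-2\alpha^{3}<0$, which is impossible for a positive definite matrix. (Equivalently, in each admissible case the trailing $2\times 2$ block has zero trace and is nonzero, hence indefinite.)

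The main obstacle is precisely the triple $(1,\infty,2)$: there $\SOC{1}{3}$ and $\SOC{\infty}{3}$ really are linearly isomorphic, via \eqref{eq:k1kinf}, so non-self-duality cannot follow from a rigidity statement about isomorphisms alone — one is forced to use that the isomorphism is symmetric and positive definite, and Proposition~\ref{prop:GT}, by pinning down $\Aut(\SOC{1}{3})$, is exactly what makes that extra constraint easy to exploit.
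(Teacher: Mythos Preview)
Your proof is correct and follows essentially the same route as the paper: reduce to a symmetric positive definite isomorphism via Proposition~\ref{prop:selfdual}, apply Theorem~\ref{thm:main} to handle all cases except $(p,q,n)=(1,\infty,2)$, and for that exceptional case use the explicit isomorphism \eqref{eq:k1kinf} together with Proposition~\ref{prop:GT} to pin down the candidates for $A$. The only cosmetic difference is in the endgame: the paper lists the four symmetric matrices $A=BC$ explicitly and observes none is positive definite, whereas you use the symmetry constraint to force $\det P=-1$ and conclude $\det A<0$; both arguments are equivalent and equally short.
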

\begin{proof}
Suppose that $\SOC{p}{n+1}$ is self-dual under some inner product.
Then, by Proposition~\ref{prop:selfdual}, there exists a symmetric positive definite matrix $A$ such that
$$A\SOC{p}{n+1} = \SOC{q}{n+1}\quad \text{where} \quad \frac{1}{p}+\frac{1}{q}=1.$$
If $(p,q,n)\ne (1,\infty,2), (\infty,1,2)$, then $p=q=2$ must hold by Theorem~\ref{thm:main}.
Now let us consider the case $(p,q,n)=(1,\infty,2)$, i.e., $A\SOC{1}{3} = \SOC{\infty}{3}$.
Recalling \eqref{eq:k1kinf}, we have $B\SOC{1}{3} = \SOC{\infty}{3}$ with
$$
B =
\left(
\begin{array}{ccc}
1 & 0 & 0 \\
0 & \sqrt{2}\cos(\pi/4) & -\sqrt{2}\sin(\pi/4) \\
0 & \sqrt{2}\sin(\pi/4) & \sqrt{2}\cos(\pi/4)
\end{array}
\right)
= \left(
\begin{array}{ccc}
1 & 0 & 0 \\
0 & 1 & -1 \\
0 & 1 & 1
\end{array}
\right).
$$
Therefore, $B^{-1}A \in \Aut(\SOC{1}{3})$ holds. Then, by Proposition~\ref{prop:GT}, the matrix $A$ can be written as $A = B C$ where $C$ is of the form
$$
C = \alpha \left(
\begin{array}{ccc}
1 & 0 & 0 \\
0 & \pm 1 & 0 \\
0 & 0 & \pm 1
\end{array}
\right)
~~\text{or}~~
\alpha \left(
\begin{array}{ccc}
1 & 0 & 0 \\
0 & 0 & \pm 1 \\
0 & \pm 1 & 0
\end{array}
\right),\quad \alpha >0.
$$
Since $A$ is symmetric, it has one of the following forms:
$$
\alpha \left(
\begin{array}{ccc}
1 & 0 & 0 \\
0 & -1 & -1 \\
0 & -1 & 1
\end{array}
\right),\quad
\alpha \left(
\begin{array}{ccc}
1 & 0 & 0 \\
0 & 1 & 1 \\
0 & 1 & -1
\end{array}
\right),\quad
\alpha \left(
\begin{array}{ccc}
1 & 0 & 0 \\
0 & -1 & 1 \\
0 & 1 & 1
\end{array}
\right),\quad
\alpha \left(
\begin{array}{ccc}
1 & 0 & 0 \\
0 & 1 & -1 \\
0 & -1 & -1
\end{array}
\right).
$$
None of them is positive definite.
Therefore, we obtain a contradiction.
\end{proof}

{\small
	\section*{Acknowledgements}
This work was partially supported by the Grant-in-Aid for Scientific Research (B) (18H03206) and the Grant-in-Aid for Young Scientists (B) (17K12645) from Japan Society for the Promotion of Science.
}

\bibliographystyle{abbrvurl}
\bibliography{bib}

\begin{thebibliography}{10}

\bibitem{BF76}
G.~P. Barker and J.~Foran.
\newblock Self-dual cones in euclidean spaces.
\newblock {\em Linear Algebra and its Applications}, 13(1):147--155, 1976.

\bibitem{Carmo1976}
M.~P. do~Carmo.
\newblock {\em Differential Geometry of Curves and Surfaces}.
\newblock Prentice-Hall, 1976.

\bibitem{Carmo92}
M.~P. do~Carmo.
\newblock {\em Riemannian Geometry}.
\newblock Mathematics (Boston, Mass.). Birkh{\"a}user, 1992.

\bibitem{FK94}
J.~Faraut and A.~Korányi.
\newblock {\em Analysis on symmetric cones}.
\newblock Oxford mathematical monographs. Clarendon Press, Oxford, 1994.

\bibitem{GT14}
M.~S. Gowda and D.~Trott.
\newblock On the irreducibility, {L}yapunov rank, and automorphisms of special
  {B}ishop–{P}helps cones.
\newblock {\em J. Math. Anal. Appl.}, 419(1):172--184, 2014.

\bibitem{IL17}
M.~Ito and B.~F. Lourenço.
\newblock The $p$-cones in dimension $n\geq3$ are not homogeneous when
  $p\neq2$.
\newblock {\em Linear Algebra and its Applications}, 533:326--335, 2017.

\bibitem{Lee2012}
J.~Lee.
\newblock {\em Introduction to Smooth Manifolds}.
\newblock Graduate Texts in Mathematics. Springer New York, 2012.

\bibitem{LS75}
R.~Loewy and H.~Schneider.
\newblock Positive operators on the $n$-dimensional ice cream cone.
\newblock {\em Journal of Mathematical Analysis and Applications},
  49(2):375--392, 1975.

\bibitem{MLC17}
X.-H. Miao, Y.-c.~R. Lin, and J.-S. Chen.
\newblock A note on the paper ``{T}he algebraic structure of the
  arbitrary-order cone''.
\newblock {\em J. Optim. Theory Appl.}, 173(3):1066--1070, 2017.

\bibitem{rockafellar}
R.~T. Rockafellar.
\newblock {\em {Convex Analysis }}.
\newblock Princeton University Press, 1997.

\bibitem{V63}
E.~B. Vinberg.
\newblock The theory of homogeneous convex cones.
\newblock {\em Trans. Moscow Math. Soc.}, 12:340--403, 1963.
\newblock (English Translation).

\end{thebibliography}

\end{document}